\tikzset{snake it/.style={decorate, decoration=snake}}
\theoremstyle{plain}
\newtheorem{thm}{Theorem}[section]
\newtheorem{cor}[thm]{Corollary}
\newtheorem{lem}[thm]{Lemma}
\newtheorem{prop}[thm]{Proposition}
\newtheorem{conj}[thm]{Conjecture}
\theoremstyle{definition}
\newtheorem{example}[thm]{Example}
\theoremstyle{remark}
\newtheorem{rmk}[thm]{Remark}
\newcommand{\BA}{{\mathbb{A}}}
\newcommand{\BC}{{\mathbb{C}}}
\newcommand{\BG}{{\mathbb{G}}}
\newcommand{\BH}{{\mathbb{H}}}
\newcommand{\BL}{{\mathbb{L}}}
\newcommand{\BN}{{\mathbb{N}}}
\newcommand{\BP}{{\mathbb{P}}}
\newcommand{\BQ}{{\mathbb{Q}}}
\newcommand{\BX}{{\mathbb{X}}}
\newcommand{\BZ}{{\mathbb{Z}}}
\newcommand{\CA}{{\mathcal A}}
\newcommand{\CB}{{\mathcal B}}
\newcommand{\CC}{{\mathcal C}}
\newcommand{\CE}{{\mathcal E}}
\newcommand{\CF}{{\mathcal F}}
\newcommand{\CH}{{\mathcal H}}
\newcommand{\CI}{{\mathcal I}}
\newcommand{\CK}{{\mathcal K}}
\newcommand{\CL}{{\mathcal L}}
\newcommand{\CM}{{\mathcal M}}
\newcommand{\CN}{{\mathcal N}}
\newcommand{\CO}{{\mathcal O}}
\newcommand{\CP}{{\mathcal P}}
\newcommand{\CQ}{{\mathcal Q}}
\newcommand{\CS}{{\mathcal S}}
\newcommand{\CT}{{\mathcal T}}
\newcommand{\CU}{{\mathcal U}}
\newcommand{\FM}{{\mathfrak{M}}}
\DeclareFontFamily{OT1}{rsfs}{}
\DeclareFontShape{OT1}{rsfs}{n}{it}{<-> rsfs10}{}
\DeclareMathAlphabet{\curly}{OT1}{rsfs}{n}{it}
\newcommand{\git}{\mathbin{
  \mathchoice{/\mkern-6mu/}% \displaystyle
    {/\mkern-6mu/}% \textstyle
    {/\mkern-5mu/}% \scriptstyle
    {/\mkern-5mu/}}}% \scriptscriptstyle
\newcommand{\Coh}{\mathrm{Coh}}
\begin{document}
\title[The P=W conjecture for $\mathrm{GL}_n$]{The P=W conjecture for $\mathrm{GL}_n$}
\date{\today}

\author[D. Maulik]{Davesh Maulik}
\address{Massachusetts Institute of Technology}
\email{maulik@mit.edu}

\author[J. Shen]{Junliang Shen}
\address{Yale University}
\email{junliang.shen@yale.edu}

\begin{abstract}
%We prove the $P=W$ conjecture for $\mathrm{GL}_n$.
We prove the $P=W$ conjecture for $\mathrm{GL}_n$ for all ranks $n$ and curves of arbitrary genus $g\geq 2$.  The proof combines a strong perversity result on tautological classes with the curious Hard Lefschetz theorem of Mellit. For the perversity statement, we apply the vanishing cycles constructions in our earlier work to global Springer theory in the sense of Yun, and prove a parabolic support theorem.

\end{abstract}

\baselineskip=14.5pt
\maketitle

\setcounter{tocdepth}{1} 

\tableofcontents
\setcounter{section}{-1}

\section{Introduction}

Throughout, we work over the complex numbers $\BC$. 

\subsection{The P=W conjecture}
The purpose of this paper is to present a proof of the $P=W$ conjecture by de Cataldo--Hausel--Migliorini \cite{dCHM1} for arbitrary rank $n$ and genus $g \geq 2$.

Let $C$ be a nonsingular irreducible projective curve of genus $g \geq 2$. For two coprime integers $n \in \BZ_{\geq 1}$ and $d \in \BZ$, there are two moduli spaces $M_{\mathrm{Dol}}$ and $M_B$, called the Dolbeault and the Betti moduli spaces, attached to $C,n,$ and $d$.

The Dolbeault moduli space parameterizes stable Higgs bundles $(\CE, \theta)$ on $C$, where $\CE$ is a vector bundle on $C$ of rank $n$ and degree $d$, $\theta: \CE \to \CE \otimes \Omega_C^1$ is a Higgs field, and stability is defined with respect to the slope $\mu(\CE,\theta) ={\mathrm{deg}(\CE)}/{\mathrm{rk(\CE)}}$. The moduli space $M_{\mathrm{Dol}}$ admits the structure of a completely integrable system
\[
h: M_{\mathrm{Dol}} \to A := \bigoplus_{i=1}^n H^0(C, {\Omega_C^{1}}^{\otimes i}), \quad (\CE, \theta) \mapsto \mathrm{char.polynomial}(\theta),
\]
which is referred to as the Hitchin system \cite{Hit, Hit1}. The Hitchin map $h$ is surjective and proper; it is also Lagrangian with respect to the canonical holomorphic symplectic form on $M_{\mathrm{Dol}}$ induced by the hyper-K\"ahler metric. The \emph{perverse filtration} is an increasing filtration 
\[
P_0H^*(M_{\mathrm{Dol}}, \BQ)\subset P_1H^*(M_{\mathrm{Dol}}, \BQ) \subset \cdots \subset H^*(M_{\mathrm{Dol}}, \BQ)
 \]
on the (singular) cohomology of $M_{\mathrm{Dol}}$ governed by the topology of the Hitchin system $h$; see Section \ref{sec1.1} for a brief review.

The Betti moduli space $M_B$ is the (twisted) character variety associated with $\mathrm{GL}_n(\BC)$ and degree $d$.  It parametrizes isomorphism classes of irreducible local systems
\[
\rho: \pi_1(C\backslash \{p\}) \rightarrow \mathrm{GL}_n(\BC)
\]
where $\rho$ sends a loop around a chosen point $p$ to 
$e^{\frac{2\pi \sqrt{-1} d}{n}}\mathrm{Id}_n$.  Concretely, we have
\begin{equation*}
M_B := \Big{\{}a_k, b_k \in \mathrm{GL}_n(\BC),~k=1,2,\dots,g: ~~\prod_{j=1}^g [a_j, b_j] = e^{\frac{2\pi \sqrt{-1} d}{n}}\mathrm{Id}_n \Big{\}}\git\mathrm{GL}_n(\BC).
\end{equation*}
It is an affine variety whose mixed Hodge structure admits a nontrivial weight filtration 
\[
W_0 H^*(M_B, \BQ) \subset W_1H^*(M_B, \BQ) \subset \cdots \subset H^*(M_B, \BQ).
\]

Non-abelian Hodge theory \cite{Simp, Si1994II} gives a diffeomorphism between the two very different algebraic varieties $M_{\mathrm{Dol}}$ and $M_B$, which canonically identifies their cohomology:
\begin{equation}\label{NAH}
 H^*(M_{\mathrm{Dol}}, \BQ)  = H^*(M_{B}, \BQ).
\end{equation}
The $P=W$ conjecture by de Cataldo--Hausel--Migliorini \cite{dCHM1} 
refines the identification (\ref{NAH}); it predicts that the perverse filtration associated with the Hitchin system is matched with the (double-indexed) weight filtration associated with the Betti moduli space. This establishes a surprising connection between topology of Hitchin systems and Hodge theory of character varieties.

\begin{conj}\label{conj}[The P=W conjecture for $\mathrm{GL}_n$, \cite{dCHM1}] For any $k,m\in \BZ_{\geq 0}$, we have
\[
P_kH^m(M_{\mathrm{Dol}}, \BQ) = W_{2k}H^m(M_{B}, \BQ) = W_{2k+1}H^m(M_B, \BQ).
\]
\end{conj}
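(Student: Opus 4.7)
My plan is to prove the two filtrations agree by first establishing one containment on a generating subring and then upgrading to equality via matching Lefschetz symmetries.

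\textbf{Step 1: reduction to tautological classes.} By the generation results of Markman and Shende--Schiffmann, the cohomology ring $H^*(M_{\mathrm{Dol}}, \BQ)$ is generated by the K\"unneth components of the Chern characters of the universal Higgs bundle on $C \times M_{\mathrm{Dol}}$. The perverse filtration for the Hitchin map is multiplicative (by general results on supersymmetric/symmetric Hitchin-type maps), and the weight filtration on $M_B$ is multiplicative as a matter of mixed Hodge theory. Hence it suffices to verify $P_k H^* = W_{2k} H^*$ on this tautological generating subring (in fact, it is enough to prove one inclusion there).

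\textbf{Step 2: perversity bound via parabolic global Springer theory.} The core technical input is a sharp perversity statement: every tautological class whose image in $H^*(M_B, \BQ)$ has weight $2k$ lies in $P_k H^*(M_{\mathrm{Dol}}, \BQ)$. To prove this, I would introduce Yun's parabolic Hitchin fibration, which adds a level structure at a point $p \in C$ and carries an action of the affine Weyl group on its direct image. Applying the vanishing-cycle constructions from our earlier work in this enriched setting rewrites tautological classes as operators produced from this affine Weyl action. The perverse degree of such a class is then controlled by a \emph{parabolic support theorem}: the supports of the simple perverse constituents appearing in the parabolic Hitchin direct image are images of Hitchin bases for smaller rank Higgs subsystems. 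This support result, proven along the lines of Ng\^o via a parabolic $\delta$-regularity estimate, pins down the perverse degree of each tautological generator and yields the containment $P_k \subseteq W_{2k}$.

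\textbf{Step 3: equality via Mellit's curious Hard Lefschetz.} Once $P_k H^* \subseteq W_{2k} H^*$ is known, I invoke Mellit's curious Hard Lefschetz theorem for $M_B$: cup product with a distinguished class $\alpha$ of weight $2$ induces isomorphisms
\[
\cup \alpha^s : \mathrm{gr}^W_{2(k-s)} H^m(M_B, \BQ) \xrightarrow{\ \sim\ } \mathrm{gr}^W_{2(k+s)} H^{m+2s}(M_B, \BQ)
\]
between symmetric weight pieces. Under non-abelian Hodge, this $\alpha$ corresponds to a relatively ample class for the Hitchin map of perverse degree one, so the relative Hard Lefschetz theorem for $h$ gives the analogous symmetry isomorphism on perverse graded pieces. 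Combined with the inclusion from Step 2, a pigeonhole comparison of graded dimensions (using that total cohomology agrees via (\ref{NAH})) forces each inclusion to be an equality. The collapse $W_{2k} = W_{2k+1}$ follows from the purity of the mixed Hodge structure on the pieces arising from the Dolbeault side.

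\textbf{Main obstacle.} The principal difficulty is Step 2: extending the vanishing cycle framework to the parabolic Hitchin fibration and proving the parabolic support theorem. Yun's Hitchin variety with level structure is no longer weakly abelian in the sense of Ng\^o, so the classical $\delta$-regularity bound has to be reworked to account for the affine Weyl symmetries, and the affine Springer-theoretic combinatorics must be tracked carefully to extract the correct perversity for tautological classes. Steps 1 and 3 are, by comparison, essentially formal once the perversity bound is in hand.
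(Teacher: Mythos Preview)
Your outline has the right ingredients (global Springer theory, a parabolic support theorem, Mellit's curious Hard Lefschetz), but Step 1 contains a genuine gap that makes the argument circular. You assert that the perverse filtration for the Hitchin map is multiplicative ``by general results,'' and use this to reduce to checking perversity on tautological generators. In fact multiplicativity of the perverse filtration is \emph{not} known in advance --- by \cite[Theorem 0.6]{dCMS} it is equivalent to the $P=W$ conjecture itself. The paper gets around this by introducing the sheaf-theoretic notion of \emph{strong perversity} (Section \ref{sec1.1}): unlike the cohomological perversity bound, strong perversity is trivially multiplicative (Lemma \ref{lem1.2}), so proving that $\mathrm{ch}_k(\CU)$ has strong perversity $k$ immediately gives (\ref{taut}) for arbitrary products. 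Without this upgrade, your Step 1 assumes what is to be proved.

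A second gap is the role of vanishing cycles. You invoke them inside the parabolic setting to ``rewrite tautological classes as operators,'' but that is not what they do here. The parabolic support theorem and Yun's perversity lemma for $c_1(L(\xi))$ both require the $\CL$-\emph{twisted} Hitchin system with $\deg(\CL)$ large; for the untwisted system (the one relevant to $P=W$) the support theorem is false over the full base. The paper's vanishing cycle argument (Theorem \ref{thm2.5}, Propositions \ref{prop2.6} and \ref{prop1.4}) is what transports strong perversity from the twisted setting back down to $\CL=\CO_C$. Relatedly, your parabolic support statement (``supports are images of Hitchin bases for smaller rank'') is too weak: one needs \emph{full support} (Theorem \ref{thm3.2}) so that a morphism of perverse sheaves known to vanish over a dense open vanishes everywhere --- this is precisely how ingredient (B) in Section 3.4 is extended from the elliptic locus to the whole base. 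Finally, note the direction of containment: the perversity bound gives $W_{2k}\subset P_k$, not $P_k\subset W_{2k}$ as you wrote; with this corrected, your Step 3 matches the paper's argument.
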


Conjecture \ref{conj} has previously been proven for $n=2$ and arbitrary genus $g\geq 2$ by de Cataldo--Hausel--Migliorini \cite{dCHM1}, and recently for arbitrary rank $n$ and genus 2 by de Cataldo--Maulik--Shen \cite{dCMS}. The compatibility between the $P=W$ conjecture and Galois conjugation on the Betti side was proven in \cite{dCMSZ}, which implies that $P=W$ does not depend on the degree $d$ as long as it is coprime to $n$. We refer to \cite[Section 1]{dCHM1} and the paragraphs following \cite[Theorem 0.2]{dCMS} for discussions concerning connections between Conjecture \ref{conj} and other directions. In particular, by \cite{CDP} and \cite[Section 9.3]{MT}, Conjecture \ref{conj} implies the correspondence between Gopakumar--Vafa invariants and Pandharipande--Thomas invariants \cite[Conjecture 3.13]{MT} for the local Calabi--Yau 3-fold $T^*C \times \BC$ in any curve class $n[C]$.

The main result of this paper is a full proof of Conjecture \ref{conj}.

\begin{thm}\label{thm}
    Conjecture \ref{conj} holds.
\end{thm}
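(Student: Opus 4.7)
The strategy is to combine a sharp perversity bound on tautological classes of $M_{\mathrm{Dol}}$ with Mellit's curious Hard Lefschetz on $M_B$, where the perversity bound is produced by applying the vanishing-cycle machinery from our earlier work to Yun's global Springer theory. First, I would reduce Conjecture~\ref{conj} to a single inclusion. Both $P_\bullet$ and $W_\bullet$ are multiplicative, and each carries a natural $\mathfrak{sl}_2$-triple: on the Dolbeault side from relative Hard Lefschetz for $h$ combined with the BBD decomposition theorem, and on the Betti side from Mellit's curious Hard Lefschetz for $H^*(M_B,\BQ)$. A comparison of these symmetries forces
\[
\dim \mathrm{Gr}^P_k H^m(M_{\mathrm{Dol}},\BQ) = \dim \mathrm{Gr}^W_{2k} H^m(M_B,\BQ),
\]
so after the identification (\ref{NAH}), Conjecture~\ref{conj} reduces to the one-sided statement
\[
W_{2k}H^m(M_B,\BQ) \subseteq P_kH^m(M_{\mathrm{Dol}},\BQ), \qquad k,m \geq 0.
\]

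Next, I would reduce this inclusion to a perversity statement on a collection of ring generators. The cohomology $H^*(M_{\mathrm{Dol}},\BQ)$ is generated as a ring by the tautological classes
\[
\alpha_i(\gamma) \;=\; \int_{\gamma} \mathrm{ch}_i(\mathbb{E}), \qquad \gamma \in H^*(C,\BQ),
\]
where $\mathbb{E}$ is a universal Higgs bundle on $C\times M_{\mathrm{Dol}}$. The weight $w(\alpha_i(\gamma))$ on the Betti side is prescribed by non-abelian Hodge theory. Since $P_\bullet$ is multiplicative, it suffices to prove the \emph{strong perversity statement}: each tautological generator lies in $P_{w/2}H^*(M_{\mathrm{Dol}},\BQ)$ with $w = w(\alpha_i(\gamma))$, which is the remaining content of the reduction.

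To prove this perversity bound, I would pass to Yun's parabolic Hitchin space $h^{\mathrm{par}}\colon M^{\mathrm{par}}_{\mathrm{Dol}} \to A$, obtained by imposing a full flag structure at a marked point $p\in C$, together with its affine Weyl-group action on cohomology. Tautological classes on $M_{\mathrm{Dol}}$ can be realized as restrictions of explicit classes on $M^{\mathrm{par}}_{\mathrm{Dol}}$ coming from the parabolic universal bundle, and applying the vanishing-cycle construction from our earlier work to one-parameter degenerations of the Hitchin base reduces the desired perversity bound to a \emph{parabolic support theorem}: the simple perverse summands appearing in the BBD decomposition of $Rh^{\mathrm{par}}_*\BQ$ are supported only on a prescribed list of closed strata of $A$ indexed by discriminant type, with no unexpected small supports.

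\textbf{Main obstacle.} The technical heart of the proof is the parabolic support theorem itself. The classical support theorems for $\mathrm{GL}_n$-Hitchin systems, due to Chaudouard--Laumon and de Cataldo, control the plain pushforward $Rh_*\BQ$ by exploiting the abelian-variety structure of smooth spectral fibers; this breaks down for $h^{\mathrm{par}}$, whose fibers over the discriminant acquire additional components governed by partial flag varieties. The novelty would be to combine the vanishing cycles along one-parameter degenerations of spectral curves with the affine Weyl symmetry on $Rh^{\mathrm{par}}_*\BQ$: the former cuts the possible supports down to a short list, and the latter, together with $\chi$-independence, eliminates those not dictated by discriminant type. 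Once this support theorem is established, the perversity bound on tautological classes, the reduction to a single inclusion, and Mellit's curious Hard Lefschetz together yield Conjecture~\ref{conj}.
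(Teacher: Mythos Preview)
Your overall architecture---reduce to a perversity bound on tautological generators, invoke Yun's global Springer theory and a parabolic support theorem, then finish with Mellit's curious Hard Lefschetz---matches the paper's. But there is a genuine gap in your reduction step. You assert that ``$P_\bullet$ is multiplicative'' and use this to pass from a perversity bound on individual generators $\alpha_i(\gamma)$ to one on arbitrary monomials. This is circular: multiplicativity of the perverse filtration for the Hitchin map is \emph{equivalent} to Conjecture~\ref{conj} itself (see \cite[Theorem~0.6]{dCMS}, recalled in the paper after Lemma~\ref{lem1.2}), so it cannot be assumed. The paper's fix is to introduce the sheaf-level notion of \emph{strong perversity} (Section~\ref{sec1.1}): a class $\gamma\in H^l(X,\BQ)$ has strong perversity $c$ if the induced map on $Rf_*\BQ_X$ shifts perverse truncation by at most $c-l$. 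Strong perversity is multiplicative by construction (Lemma~\ref{lem1.2}), so it suffices to prove that $\mathrm{ch}_k(\CU)$ has strong perversity $k$; this yields the bound on all monomials without any assumption on $P_\bullet$.

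There is a second structural discrepancy in how vanishing cycles and the twisted moduli enter. You describe vanishing cycles as acting on ``one-parameter degenerations of the Hitchin base'' to reduce directly to a support theorem. In the paper they play a different role: the global Springer argument and the parabolic support theorem (Theorem~\ref{thm3.2}) are run for the $\CL$-\emph{twisted} Hitchin system with $\deg\CL$ large, where Chaudouard--Laumon--type arguments give \emph{full} support on $C\times\widehat{A}$ (not a stratified list indexed by discriminant type). Vanishing cycles are then used, via Theorem~\ref{thm2.5} and Proposition~\ref{prop1.4}, to transport strong perversity from the $\CL(p)$-twisted to the $\CL$-twisted system, iterating down to $\CL=\CO_C$. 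Finally, the proof of the parabolic support theorem itself proceeds via the weak abelian fibration formalism and a fiber-dimension bound (comparison with strongly parabolic Higgs moduli), not via affine Weyl symmetry or $\chi$-independence as you suggest.
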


Conjecture \ref{conj} has several variants which, to the best of our knowledge, are still open. These include the version formulated for possibly singular moduli spaces, intersection cohomology, and general reductive groups \cite{dCHM1, dCM, FM}, and the version formulated for moduli stacks \cite{Davison}.  There also exist parabolic versions of Conjecture \ref{conj} and we expect that our argument applies to these settings using parabolic variants of the ingredients here \cite{Mellit, OY}. We refer to \cite{DMS, HLSY, Geo_P=W, Zhang} and references therein for the $P=W$ phenomenon in other settings.

For the case of type $A_{n-1}$ ($\mathrm{GL}_n, \mathrm{PGL}_n, \mathrm{SL}_n$) and a degree $d$ coprime to $n$, Conjecture \ref{conj} (\emph{i.e.} the $P=W$ conjecture for $\mathrm{GL_n}$) is equivalent to the $P=W$ conjecture for $\mathrm{PGL}_n$; see the paragraph after \cite[Theorem 0.2]{dCMS}. The case of $\mathrm{SL}_n$ is more subtle --- it is closely related to the endoscopic decomposition of the $\mathrm{SL}_n$ Hitchin moduli spaces. A systematic discussion on this aspect can be found in \cite[Section 5]{MS}. In the special case when $n=p$ a prime number, \cite[Theorem 0.2]{SLn} implies that the $P=W$ conjecture for $\mathrm{GL}_p$, $\mathrm{SL}_p$, and $\mathrm{PGL}_p$ are all equivalent. In particular, we have the following immediate consequence of Theorem \ref{thm}, which generalizes the result of \cite{dCHM1} for $\mathrm{SL}_2$.

\begin{thm}
The $P=W$ conjecture holds for a curve $C$ of any genus at least two and $\mathrm{SL}_p$ with $p$ a prime number.
\end{thm}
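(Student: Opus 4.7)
The plan is to deduce this statement as a direct corollary of Theorem \ref{thm}, using the equivalence results already cited in the preceding discussion. The argument is purely logical chaining: no new geometric or Hodge-theoretic input is needed once Theorem \ref{thm} is in hand.

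First I would invoke Theorem \ref{thm} itself. For the prime $n = p$ and any degree $d$ coprime to $p$ (for instance $d = 1$), the theorem asserts that $P_k H^m(M_{\mathrm{Dol}}, \BQ) = W_{2k}H^m(M_B, \BQ) = W_{2k+1}H^m(M_B,\BQ)$ for the $\mathrm{GL}_p$ Dolbeault and Betti moduli spaces attached to the curve $C$ of genus $g \geq 2$. In other words, the $P=W$ conjecture for $\mathrm{GL}_p$ holds in this setting.

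Next I would apply \cite[Theorem 0.2]{SLn}, which is the key structural input. That result states that when the rank equals a prime $p$, the $P=W$ conjecture for the three type $A_{p-1}$ groups $\mathrm{GL}_p$, $\mathrm{SL}_p$, and $\mathrm{PGL}_p$ are all equivalent. (This rests on the endoscopic decomposition of the $\mathrm{SL}_n$ Hitchin moduli spaces, which collapses into a simple form precisely when the rank has no nontrivial divisors, as discussed in \cite[Section 5]{MS}.) Combining this equivalence with the $\mathrm{GL}_p$ case just established immediately gives the $P=W$ conjecture for $\mathrm{SL}_p$.

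Since both ingredients are already in place, there is effectively no obstacle here; the statement is a one-line corollary. If anything, the only subtlety one should flag in writing is to verify that the coprimality hypothesis needed to apply Theorem \ref{thm} (so that the moduli spaces are smooth and the statements make sense in the form used) is compatible with the set-up of \cite[Theorem 0.2]{SLn}, but this is automatic since the equivalence there is formulated in exactly the same smooth coprime regime. The proof therefore consists of a two-sentence deduction pointing to Theorem \ref{thm} and \cite[Theorem 0.2]{SLn}.
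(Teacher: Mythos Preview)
Your proposal is correct and matches the paper's approach exactly: the paper presents this theorem as an immediate consequence of Theorem~\ref{thm} together with \cite[Theorem 0.2]{SLn}, with no additional argument given beyond the sentence preceding the statement.
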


\subsection{Idea of the proof}
Our proof of Conjecture \ref{conj} has 4 major steps.

\begin{enumerate}
    \item[Step 1.] \emph{Strong perversity of Chern classes}: Using work of Markman \cite{Markman}, Shende \cite{Shende}, and Mellit \cite{Mellit}, we may reduce the $P=W$ conjecture to a statement about the interaction between Chern classes of the universal family and the perverse filtration. This allows us to reduce the $P=W$ conjecture to a sheaf-theoretic statement which concerns strong perversity of Chern classes.
    \item[Step 2.] \emph{Vanishing cycle techniques}: We apply the formalism of vanishing cycles to reduce the sheaf-theoretic formulation of Step 1 for the Hitchin system, to the case of twisted Hitchin systems associated with meromorphic Higgs bundles. Our motivation is from the key observation by Ng\^o \cite{Ngo} and Chaudouard-Laumon \cite{CL}, that the decomposition theorem for such twisted Hitchin systems is more manageable. 
    
    Steps 1 and 2 are carried out in Sections 1 and 2.  

    \item[Step 3.]\emph{Global Springer theory}: The global Springer theory of Yun \cite{Yun1, Yun2,Yun3} produces rich symmetries for certain Hitchin moduli spaces; this proves the sheaf-theoretic formulation of Step 1 over the \emph{elliptic locus}. For our purpose, we need a version of global Springer theory for the stable locus over the total Hitchin base. This is described in Section 3.

    \item[Step 4.] \emph{A support theorem}: Lastly, in order to extend part of Yun's symmetries induced by Chern classes from the elliptic locus to the total Hitchin base, we prove a support theorem parallel to \cite{CL} for certain parabolic Hitchin moduli spaces. This is completed in Section 4.
\end{enumerate}

The idea of combining vanishing cycle functors and support theorems was also applied in \cite{MS} to give a proof of the topological mirror symmetry conjecture of Hausel--Thaddeus \cite{HT, GWZ}.

\subsection{Acknowledgements}
We would like to thank Mark Andrea de Cataldo, Bhargav Bhatt, Ben Davison, Jochen Heinloth, and Max Lieblich for various discussions.  We are especially grateful to Zhiwei Yun for explaining his thesis to us, both in his office and at the playground.  We also thank the anonymous referee for careful
reading and useful suggestions on the exposition. J.S. was supported by the NSF grants DMS-2134315 and DMS-2301474.

\section{Perverse filtrations and vanishing cycles}

In this section, we introduce the notion of \emph{strong perversity} and show its compatibility with vanishing cycle functors (Proposition \ref{prop1.4}). This plays a crucial role in Section 2 in lifting the $P=W$ conjecture sheaf-theoretically and reduce it to the twisted case.

\subsection{Perverse filtrations}\label{sec1.1}
Let $f: X \to Y$ be a proper morphism between irreducible nonsingular quasi-projective varieties (or Deligne--Mumford stacks) with $\mathrm{dim}X = a$ and $\mathrm{dim}Y = b$. Let $r$ be the defect of semismallness of $f$:
\[
r: = \mathrm{dim} X \times_YX - \mathrm{dim}X.
\]
In particular, we have $r = a-b$ when $f$ has equi-dimensional fibers. The perverse filtration 
\[
P_0H^m(X, \BQ) \subset P_1H^m(X, \BQ) \subset \dots \subset H^m(X, \BQ)
\]
is an increasing filtration on the cohomology of $X$ governed by the topology of the morphism $f$; it is defined to be
\[
P_iH^m(X, \BQ) := \mathrm{Im}\left\{ H^{m-a+r}(Y, {^\mathbf{p}\tau_{\leq i}} (Rf_* \BQ_X[a-r])) \to H^m(X, \BQ)\right\}
\]
where $^\mathbf{p}\tau_{\leq * }$ is the perverse truncation functor \cite{BBD}. 

\begin{lem}\label{lem0}
If $f$ has equi-dimensional fibers, the perverse filtration on $H^m(X,\BQ)$ terminates at $P_mH^m(X, \BQ)$, \emph{i.e.}
\[
P_mH^m(X, \BQ) = H^m(X, \BQ).
\]
In particular, we have $1 \in P_0H^0(X, \BQ)$.
\end{lem}

\begin{proof}
By definition and the decomposition theorem \cite{BBD}, the dimensions of the graded pieces of the perverse filtration are given by
\begin{equation}\label{llm0}
\mathrm{dim} \mathrm{Gr}^P_iH^{m}(X, \BQ) = \mathrm{dim} H^{m-i-(a-r)}\left(Y,{^\mathfrak{p}\CH^i(Rf_* \BQ_X[a-r])}\right).
\end{equation}
Since a perverse sheaf, as a complex of constructible sheaves, is concentrated in degrees $[-b, 0]$, it only has non-trivial cohomology in degrees $\geq -b$. In particular, the right-hand side of (\ref{llm0}) is non-trivial only if 
\[
m-i-(a-r) \geq -b.
\]
Since $r=a-b$, this inequality is equivalent to $m \geq i$.
\end{proof}

A cohomology class $\gamma \in H^l(X, \BQ)$ can be viewed as a morphism $\gamma: \BQ_X \to \BQ_X[l]$, which naturally induces 
\begin{equation}\label{induced}
\gamma: Rf_* \BQ_X \to Rf_* \BQ_X [l]
\end{equation}
after pushing forward along $f$. For an integer $c\geq 0$, we say that $\gamma \in H^l(X, \BQ)$ has \emph{strong perversity} $c$ with respect to $f$ if its induced morphism (\ref{induced}) satisfies
\begin{equation}\label{assumption}
\gamma \left({^\mathbf{p}\tau_{\leq i} Rf_* \BQ_X}\right) \subset {^\mathbf{p}\tau_{\leq i+(c-l)}} \left(Rf_* \BQ_X  [l] \right), \quad \forall i;
\end{equation}
more precisely, the condition (\ref{assumption}) says that the composition
\[
{^\mathbf{p}\tau_{\leq i} Rf_* \BQ_X} \hookrightarrow Rf_* \BQ_X \xrightarrow{\gamma} Rf_* \BQ_X [l]
\]
factors through ${^\mathbf{p}\tau_{\leq i+(c-l)}} \left(Rf_* \BQ_X  [l] \right) \hookrightarrow Rf_* \BQ_X  [l]$ for any $i$.  Notice that $\gamma$ automatically has strong perversity $l$, so this condition is interesting only when $c < l$. Combining Lemma \ref{lem0} and the following lemma, we see that if $f$ has equi-dimensional fibers and $\gamma$ has strong perversity $c$, then $\gamma \in P_cH^*(X, \BQ)$.

\begin{lem}\label{lem1.1}
If $\gamma\in H^l(X, \BQ)$ has strong perversity $c$ with respect to $f$, then taking cup-product with $\gamma$ satisfies
\[
\gamma\cup - : H^m(X, \BQ) \to H^{m+l}(X, \BQ), \quad P_iH^m(X, \BQ) \mapsto P_{i+c}H^{m+l}(X, \BQ).
\]
\end{lem}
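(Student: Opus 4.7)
The plan is to unwind the definition of the perverse filtration and translate the strong perversity assumption into a morphism between truncated complexes, from which the conclusion follows immediately by functoriality of sheaf cohomology. The argument is purely formal.

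First I would set $K := Rf_* \BQ_X$ and rewrite the perverse filtration in a shift-free form. Using the standard identity ${^\mathbf{p}\tau_{\leq i}}(K[j]) = ({^\mathbf{p}\tau_{\leq i+j}}K)[j]$, the definition of $P_\bullet$ becomes
\[
P_iH^m(X, \BQ) = \mathrm{Im}\Big\{H^m(Y, {^\mathbf{p}\tau_{\leq i+a-r}}K) \to H^m(Y, K) = H^m(X, \BQ)\Big\}.
\]
The same identity turns the target in (\ref{assumption}) into $({^\mathbf{p}\tau_{\leq i+c}}K)[l]$, so the strong perversity hypothesis becomes: for every $i$, the composition ${^\mathbf{p}\tau_{\leq i}}K \hookrightarrow K \xrightarrow{\gamma} K[l]$ factors through $({^\mathbf{p}\tau_{\leq i+c}}K)[l] \hookrightarrow K[l]$. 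Replacing $i$ by $i+a-r$ then yields a morphism $\tilde\gamma: {^\mathbf{p}\tau_{\leq i+a-r}}K \to ({^\mathbf{p}\tau_{\leq (i+c)+a-r}}K)[l]$ sitting in a commutative square with (\ref{induced}) along the natural truncation inclusions.

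Second, I would apply $H^m(Y, -)$ to this square, producing a commutative diagram whose horizontal arrows are precisely the structure maps defining $P_iH^m \subset H^m(X, \BQ)$ and $P_{i+c}H^{m+l} \subset H^{m+l}(X, \BQ)$, and whose right vertical arrow is cup-product with $\gamma$ under the standard identification of $\gamma$ both as a class in $H^l(X, \BQ)$ and as the induced morphism $K \to K[l]$. Given $\alpha \in P_iH^m$, I pick a lift $\tilde\alpha \in H^m(Y, {^\mathbf{p}\tau_{\leq i+a-r}}K)$; then $\tilde\gamma_*(\tilde\alpha)$ lifts $\gamma \cup \alpha$ into $H^{m+l}(Y, {^\mathbf{p}\tau_{\leq (i+c)+a-r}}K)$, placing $\gamma \cup \alpha$ in $P_{i+c}H^{m+l}(X, \BQ)$ by definition.

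The only place where any care is required is the bookkeeping of truncation shifts: one must check that the arithmetic $i \mapsto i+a-r$ occurring in the definition of $P_\bullet$ interacts correctly with the $[l]$ appearing in the strong perversity condition, which is exactly what the identity ${^\mathbf{p}\tau_{\leq i}}(K[j]) = ({^\mathbf{p}\tau_{\leq i+j}}K)[j]$ ensures. There is no substantive obstacle.
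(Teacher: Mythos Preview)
Your proposal is correct and is essentially the same argument as the paper's proof, just spelled out in more detail. The paper simply says to take global cohomology of the strong perversity condition and invokes the identity $H^{m-a+r}\bigl(Y,\, {^\mathbf{p}\tau_{\leq i+(c-l)}}(Rf_*\BQ_X[l+a-r])\bigr) = H^{(m-a+r)+l}\bigl(Y,\, {^\mathbf{p}\tau_{\leq i+c}}(Rf_*\BQ_X[a-r])\bigr)$, which is exactly the truncation--shift compatibility ${^\mathbf{p}\tau_{\leq i}}(K[j]) = ({^\mathbf{p}\tau_{\leq i+j}}K)[j]$ that you unwind explicitly.
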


\begin{proof}
This follows from taking global cohomology for (\ref{assumption}) and noticing that
\begin{align*}
    H^{m-a+r}\left(Y, {^\mathbf{p}\tau_{\leq i+(c-l)}} \left(Rf_* \BQ_X  [l+a-r] \right)\right) & \\ = H^{(m-a+r)+l}&\left(Y, {^\mathbf{p}\tau_{\leq i+c}} \left(Rf_* \BQ_X  [a-r] \right)\right). \qedhere
    \end{align*}
\end{proof}

In general, the perverse filtration $P_\bullet H^*(X, \BQ)$ may not be multiplicative, \emph{i.e.}, for $\gamma_j \in P_{c_j}H^*(X, \BQ)$ (j=1,2,\dots,s), it may not be true that
\[
\gamma_1 \cup \gamma_2 \cup \cdots \cup \gamma_s \in P_{c_1+\cdots+c_s}H^*(X, \BQ);
\]
see \cite[Exercise 5.6.8]{Park}. In fact, it was proven in \cite[Theorem 0.6]{dCMS} that Conjecture \ref{conj} is equivalent to the multiplicativity of the perverse filtration associated with the Hitchin system. The following easy observation illustrates the advantage of considering strong perversity in view of the multiplicativity issue.

\begin{lem}\label{lem1.2}
If the class $\gamma_j \in H^{l_j}(X, \BQ)$ ($j=1,2,\dots, s$) has strong perversity $c_j$ with respect to $f$, then the cup product 
\[
\gamma_1 \cup \gamma_2 \cup \cdots \cup \gamma_s \in H^{l_1+\cdots+l_s}(X, \BQ)
\]
has strong perversity $\sum_jc_j$.
\end{lem}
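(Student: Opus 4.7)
The plan is to proceed by induction on $s$, reducing everything to the binary case $s=2$, which we handle by realizing the cup product as a composition at the level of morphisms in the derived category and then tracking how perverse truncations transform under the shift functor.

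First, I would reinterpret each $\gamma_j$ sheaf-theoretically. As described in (\ref{induced}), the class $\gamma_j$ corresponds to a morphism $\gamma_j: Rf_*\BQ_X \to Rf_*\BQ_X[l_j]$, and the cup product $\gamma_1 \cup \gamma_2 \in H^{l_1+l_2}(X,\BQ)$ is obtained by pushing forward the composition
\[
\BQ_X \xrightarrow{\gamma_2} \BQ_X[l_2] \xrightarrow{\gamma_1[l_2]} \BQ_X[l_1+l_2],
\]
so that $\gamma_1 \cup \gamma_2: Rf_*\BQ_X \to Rf_*\BQ_X[l_1+l_2]$ is the composition of $\gamma_2$ with $\gamma_1[l_2]$. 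The key fact I would invoke is that the perverse $t$-structure interacts with shifts via the identity ${^\mathbf{p}\tau_{\leq k}}(F[n]) = ({^\mathbf{p}\tau_{\leq k+n}} F)[n]$ for any complex $F$ and any integer $n$.

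Next I would trace through the binary case. By strong perversity of $\gamma_2$, the morphism sends ${^\mathbf{p}\tau_{\leq i}} Rf_*\BQ_X$ into ${^\mathbf{p}\tau_{\leq i+(c_2-l_2)}}(Rf_*\BQ_X[l_2])$. Then $\gamma_1[l_2]$, obtained by shifting the strong perversity $c_1$ map for $\gamma_1$, sends ${^\mathbf{p}\tau_{\leq j}}(Rf_*\BQ_X[l_2])$ into ${^\mathbf{p}\tau_{\leq j+(c_1-l_1)}}(Rf_*\BQ_X[l_1+l_2])$ for every $j$, as one checks by rewriting both sides as shifts of perverse truncations of $Rf_*\BQ_X$ via the identity above and applying the strong perversity of $\gamma_1$. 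Composing, the map $\gamma_1 \cup \gamma_2$ sends ${^\mathbf{p}\tau_{\leq i}} Rf_*\BQ_X$ into ${^\mathbf{p}\tau_{\leq i+(c_1+c_2)-(l_1+l_2)}}(Rf_*\BQ_X[l_1+l_2])$, which is precisely the condition that $\gamma_1 \cup \gamma_2$ has strong perversity $c_1+c_2$.

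Finally, an immediate induction on $s$, using associativity of the cup product and the binary case applied to $\gamma_1$ and $\gamma_2 \cup \cdots \cup \gamma_s$ (which by inductive hypothesis has strong perversity $c_2+\cdots+c_s$), yields the general statement. There is no serious obstacle here; the only thing that needs care is the bookkeeping of shifts, in particular verifying that $\gamma_1[l_2]$ inherits the strong perversity property from $\gamma_1$ with the correct index shift. This is an essentially formal consequence of the compatibility of ${^\mathbf{p}\tau_{\leq \bullet}}$ with translation, so the whole argument is a short diagram chase rather than a substantive computation.
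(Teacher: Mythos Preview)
Your proposal is correct and is exactly the argument the paper has in mind; in fact the paper states Lemma \ref{lem1.2} as an ``easy observation'' and gives no proof at all, so you have simply written out the obvious composition-of-morphisms argument that the authors left implicit.
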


\subsection{Vanishing cycles}\label{Sec1.2}

Throughout section \ref{Sec1.2}, we let $g: X \to \BA^1$ be a morphism such that $X$ is nonsingular and irreducible with $X_0 = g^{-1}(0)$ the closed fiber over $0 \in \BA^1$. We consider the vanishing cycle functor 
\[
\varphi_g: D^b_c(X) \rightarrow D^b_c(X_0)
\]
which preserves the perverse $t$-structures,
\[
\varphi_g: \mathrm{Perv}(X) \rightarrow \mathrm{Perv}(X_0).
\]
Here $\mathrm{Perv}(-)$ stands for the abelian category of perverse sheaves. We denote by 
\[
\varphi_g : = \varphi_g(\mathrm{IC}_X) = \varphi_g(\BQ_X[\mathrm{dim}X]) \in \mathrm{Perv}(X_0) \]
the perverse sheaf of vanishing cycles. We use $X' \subset X$ to denote the support of the vanishing cycle complex $\varphi_g$ so that $\varphi_g \in \mathrm{Perv}(X')$.

Recall that for any bounded constructible object $\CK \in D^b_c(X)$, a class $\gamma \in H^l(X, \BQ)$ induces a morphism
\[
\gamma: \CK \to \CK [l] 
\]
via taking the tensor product with $\gamma: \BQ_X \to \BQ_X[l]$. The following lemma shows the compatibility between the vanishing cycle functor and restriction of cohomology classes.

\begin{lem}\label{lem1.3}
With the same notation as above, let $i: X'\hookrightarrow X$ be the closed embedding. The morphism \[
i^*\gamma: \varphi_g \rightarrow \varphi_g[l] \in D^b_c(X')
\]
induced by the class $i^*\gamma$ (applied to the object $\CK = \varphi_g$) coincides with 
\[
\varphi_g(\gamma): \varphi_g \to \varphi_g[l] \in D_c^b(X')
\]
obtained by applying the functor $\varphi_g$ to $\gamma: \BQ_X \to \BQ_X[l]$.
\end{lem}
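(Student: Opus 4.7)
The plan is to reduce the statement to a general compatibility between the vanishing cycle functor and the tensor action of cohomology classes on $X$, and then to establish that compatibility via the defining distinguished triangle of $\varphi_g$.

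Recall that for any $\CK \in D^b_c(X)$, the morphism $\gamma_\CK: \CK \to \CK[l]$ induced by $\gamma: \BQ_X \to \BQ_X[l]$ is, by definition, $\mathrm{id}_\CK \otimes \gamma$ under the canonical isomorphism $\CK \cong \CK \otimes^L \BQ_X$. In particular, the left-hand side of the lemma is $\mathrm{id}_{\varphi_g} \otimes i^*\gamma$. Let $i_0: X_0 \hookrightarrow X$ denote the inclusion of the special fiber; since $\varphi_g$ is supported on $X' \subset X_0$, the projection formula for the closed immersion $X' \hookrightarrow X_0$ identifies $\mathrm{id}_{\varphi_g} \otimes i^*\gamma$ with $\mathrm{id}_{\varphi_g} \otimes i_0^*\gamma$. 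The lemma therefore reduces to the following general compatibility:
\[
(\star) \qquad \varphi_g(\mathrm{id}_\CK \otimes \alpha) = \mathrm{id}_{\varphi_g(\CK)} \otimes i_0^*\alpha \quad \text{for all } \CK \in D^b_c(X),\ \alpha: \BQ_X \to \BQ_X[l],
\]
since applying $(\star)$ with $\CK = \BQ_X$ and $\alpha = \gamma$, then shifting by $[\mathrm{dim}X]$, yields $\varphi_g(\gamma) = (i^*\gamma)_{\varphi_g}$.

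To prove $(\star)$, I would apply the five lemma to the morphism of distinguished triangles
\[
i_0^*\CK \to \psi_g(\CK) \to \varphi_g(\CK) \to i_0^*\CK[1]
\]
induced by $\mathrm{id}_\CK \otimes \alpha$. The analogous identity for $i_0^*$ is immediate, since pullback along a closed immersion is symmetric monoidal. The analogous identity for the nearby cycle functor $\psi_g$ follows from its standard presentation $\psi_g(\CK) = i_0^* R p_* p^* \CK$, where $p: \tilde X \to X$ is the base change of $g: X \to \BA^1$ along the universal cover of $\BA^1 \setminus \{0\}$: each of $p^*$, $Rp_*$, and $i_0^*$ intertwines the tensor action of cohomology classes on $X$ with the corresponding action on the base (via symmetric monoidality of the pullbacks and the projection formula for $Rp_*$).

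The main obstacle is this final compatibility for $\psi_g$: while morally automatic from the functoriality of its constituent operations, a rigorous verification requires tracing through the construction step by step and matching the tensor actions of $\gamma \in H^*(X, \BQ)$ and $i_0^*\gamma \in H^*(X_0, \BQ)$ at each stage. Once $(\star)$ is established, the lemma follows formally as indicated above.
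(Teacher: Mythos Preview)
Your reduction to the general compatibility $(\star)$ is correct and is exactly what the paper also proves. The difference lies in how $(\star)$ is established, and there is a genuine gap in your approach.

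You propose to deduce the compatibility for $\varphi_g$ from the analogous compatibilities for $i_0^*$ and $\psi_g$ via the defining distinguished triangle $i_0^*\CK \to \psi_g(\CK) \to \varphi_g(\CK) \to i_0^*\CK[1]$, invoking the ``five lemma.'' But the five lemma does not do what you need here. Knowing that two morphisms of distinguished triangles agree on the first two vertices does \emph{not} force them to agree on the third: the difference $f_C - g_C$ factors through a map $i_0^*\CK[1] \to \varphi_g(\CK)[l]$, and there is no reason for this $\Hom$ group to vanish. This is the standard non-functoriality of cones in a triangulated category, and it cannot be circumvented by a diagram chase at the derived level. Your identified ``main obstacle'' (the compatibility for $\psi_g$) is actually routine; the real obstacle is the step you treat as automatic.

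The paper avoids this entirely by using a direct formula for the vanishing cycle functor from Kashiwara--Schapira: $\varphi_g(-) = \iota^* R\CH om(\CC, -)$ for a fixed complex $\CC$ on $X$. One then checks that each of the two constituent functors $R\CH om(\CC,-)$ and $\iota^*$ intertwines the action of $\gamma$ with that of $\gamma$ (respectively $\iota^*\gamma$): the first via the natural map $R\CH om(\CC,\CA)\otimes\CB \to R\CH om(\CC,\CA\otimes\CB)$, the second by monoidality of pullback. This is exactly the same style of argument you carry out for $\psi_g = i_0^* Rp_* p^*$, just applied to a formula for $\varphi_g$ itself rather than to $\psi_g$ and a cone. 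If you replace your triangle argument with this direct formula, your proof becomes complete and essentially matches the paper's.
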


\begin{proof}
Let $\iota: X_0 \hookrightarrow X$ be the closed embedding of the closed fiber over $0$. By \cite[Definition 8.6.2]{KS}, the vanishing cycle functor can be written as
\begin{equation}\label{vanishing}
\varphi_g(-) = \iota^* R\CH{om}(\CC, -) \in D^b_c(X_0)
\end{equation}
with $\CC$ a fixed complex of sheaves. The morphism obtained
by applying $R\CH{om}(\CC, -)$ to $\gamma: \BQ_X \to \BQ_X[l]$ 
is equivalent to the morphism induced by applying $\gamma$ to $R\CH{om}(\CC, \BQ_X)$.
Similarly, the functor $\iota^*$ sends the morphism induced by $\gamma$
to the morphism induced by $\iota^*\gamma$.

%by applying (\ref{vanishing}) to $\gamma: \BQ_X \to \BQ_X[l]$ is equivalent to the morphism applied $\iota^*\gamma$ to the object $\iota^* R\CH{om}(\CK, \BQ_X)$:
%\[
%\iota^*\gamma: \iota^* R\CH{om}(\CK, \BQ_X) \to \iota^* R\CH{om}(\CK, %\BQ_X)[l] \in \mathrm{Perv}(X_0).
%\]
Therefore, if we denote by $\iota': X' \hookrightarrow X_0$ the closed embedding and view $\varphi_g$ as a perverse sheaf on $X'$, we have an equivalent morphism
\begin{equation}\label{321} 
\varphi_g(\gamma) = \iota^*\gamma:  \iota'_*\varphi_g \to \iota'_*\varphi_g[l] \in D^b_c(X_0). 
\end{equation}

%We note that Betti moduliy definition, the vanishing cycle complex $\varphi_g$, as an oBetti moduliject on $X_0$, is of the form
%\[
%\varphi_g = \iota^* \CF
%\]
%with $\CF$ a constructiBetti modulile oBetti moduliject on $X$. The cohomology class $\gamma \in H^l(X, \BQ)$ then induces 
%\[
%\gamma: \CF \to \CF [l] \in D^b_c(X),
%\]
%whose restriction to $X_0$ induces the vanishing cycle morphism $\varphi_g(\gamma)$ on $X_0$:
%\[
%\varphi_g(\gamma) = \iota^*\gamma:  \iota^* \CF \to \iota^*\CF[l] \in D^b_c(X_0). 
%\]
%Finally, as the object $\iota^*\CF= \varphi_g$ is supporte on $X'$, for the closed embedding $\iota': X' \hookrightarrow X_0$ we have
%\[
%\iota^*\gamma: \iota'_* \varphi_g \to \iota'_* \varphi_g[l] \in D^b_c(X_0).
%\]
Finally, after applying $\iota'^*: D^b_c(X_0) \rightarrow D^b_c(X')$ to (\ref{321}) and noticing $\iota'^*\iota'_*= \mathrm{id}$, we obtain that the class $i^*\gamma = \iota'^*\iota^*\gamma$ induces
\[
\varphi_g(\gamma): \varphi_g \to \varphi_g[l] \in D_c^b(X').
\]
This completes the proof.
\end{proof}

\begin{prop}\label{prop1.4}
Let $g: X \to \BA^1$ and $X'$ be as above. Assume that $X'$ is nonsingular and
\begin{equation}\label{4.1_0}
\varphi_g \simeq \mathrm{IC}_{X'} = \BQ_{X'}[\mathrm{dim}X'] \in \mathrm{Perv}(X').
\end{equation}
Assume further that we have the commutative diagram
\begin{equation*}
\begin{tikzcd}
X' \arrow[r, "i"] \arrow[d, "f'"]
& X \arrow[d, "f"] \\
Y' \arrow[r]
& Y
\end{tikzcd}
\end{equation*}
such that $f$ is proper and $g =   f\circ \nu$ with $\nu: Y \to \BA^1$. Then if a class $\gamma \in H^l(X, \BQ)$ has strong perversity $c$ with respect to $f$, its restriction $i^*\gamma \in H^l(X', \BQ)$ has strong perversity $c$ with respect to $f'$. 
\end{prop}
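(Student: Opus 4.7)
The plan is to apply the vanishing cycle functor $\varphi_\nu$ associated with $\nu \colon Y \to \BA^1$ to the factorization expressing strong perversity $c$ of $\gamma$ with respect to $f$, and then translate the result into the analogous factorization for $i^*\gamma$ with respect to $f'$, using Lemma \ref{lem1.3} and the IC hypothesis (\ref{4.1_0}).

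The two sheaf-theoretic inputs I would rely on are standard properties of vanishing cycles. First, proper base change: since $f$ is proper and $g = \nu \circ f$, there is a natural isomorphism
\[
\varphi_\nu \circ Rf_* \simeq R\bar f_* \circ \varphi_g,
\]
where $\bar f \colon X_0 \to Y_0$ is the restriction of $f$ to the zero fibers. Second, $t$-exactness: $\varphi_\nu$ preserves the perverse $t$-structure, hence commutes with the perverse truncation functors ${^\mathbf{p}\tau_{\leq \bullet}}$. Together, these imply that $\varphi_\nu$ sends a factorization through a perverse truncation of a complex of the form $Rf_*\CK$ to a factorization through the corresponding perverse truncation of $R\bar f_*(\varphi_g\CK)$.

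With these tools in hand, I would apply $\varphi_\nu$ to the defining factorization of strong perversity $c$ of $\gamma$, namely the statement that
\[
{^\mathbf{p}\tau_{\leq i}} Rf_*\BQ_X \hookrightarrow Rf_*\BQ_X \xrightarrow{\gamma} Rf_*\BQ_X[l]
\]
factors through ${^\mathbf{p}\tau_{\leq i+(c-l)}}(Rf_*\BQ_X[l])$ for every $i$. Proper base change and $t$-exactness turn the outer objects into ${^\mathbf{p}\tau_{\leq i}}(R\bar f_*\varphi_g)$ and ${^\mathbf{p}\tau_{\leq i+(c-l)}}(R\bar f_*\varphi_g[l])$. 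The middle arrow becomes $R\bar f_*(\varphi_g(\gamma))$, which by Lemma \ref{lem1.3} equals the arrow induced by the class $i^*\gamma$ acting on $\varphi_g$. The IC hypothesis (\ref{4.1_0}) identifies $\varphi_g$ with a $\mathrm{dim}\, X'$-fold shift of $\BQ_{X'}$ supported on $X'$; consequently $R\bar f_*\varphi_g$ is the corresponding shift of $Rf'_*\BQ_{X'}$, and the resulting factorization coincides, after a uniform index shift, with the defining factorization of strong perversity $c$ for $i^*\gamma$ with respect to $f'$.

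The main obstacle is the compatibility check that $\varphi_\nu(Rf_*\gamma)$ agrees, under proper base change, with the arrow on $R\bar f_*\varphi_g$ induced by $i^*\gamma$; once this is set up, everything else follows formally. This compatibility is essentially a naturality argument using the $R\CH{om}(\CC,-)$-presentation of $\varphi$ from (\ref{vanishing}) together with Lemma \ref{lem1.3}, so it is technical rather than deep.
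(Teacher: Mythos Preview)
Your proposal is correct and follows essentially the same route as the paper: apply $\varphi_\nu$ to the strong-perversity factorization for $\gamma$, use proper base change $\varphi_\nu\circ Rf_* \simeq R\bar f_*\circ\varphi_g$ together with $t$-exactness of $\varphi_\nu$, and then invoke Lemma \ref{lem1.3} and the hypothesis (\ref{4.1_0}) to identify the resulting factorization with the one defining strong perversity $c$ of $i^*\gamma$ with respect to $f'$. The only cosmetic difference is that the paper writes $f'$ directly in the base change (since $\varphi_g$ is supported on $X'$), whereas you pass through $\bar f\colon X_0\to Y_0$ first; the content is the same.
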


\begin{proof}
By definition, the morphism 
\begin{equation}\label{1.4_1}
\gamma: Rf_* \BQ_X \to Rf_* \BQ_X[l] \in D^b_c(Y)
\end{equation}
induced by $\gamma$ satisfies
\begin{equation}\label{1.4_2}
\gamma \left({^\mathbf{p}\tau_{\leq i} Rf_* \BQ_X}\right) \subset {^\mathbf{p}\tau_{\leq i+(c-l)}} (Rf_* \BQ_X[l]), \quad \forall i.
\end{equation}
Now we apply the vanishing cycle functor $\varphi_\nu$ to (\ref{1.4_1}). On one hand, we have the base change $Rf'_* \circ \varphi_g \simeq \varphi_\nu\circ Rf_*$ and the fact that the vanishing cycle functor preserves the perverse $t$-structures. Therefore (\ref{1.4_2}) implies  that the morphism
\[
\varphi_g(\gamma): Rf'_* \varphi_g \to Rf'_* \varphi_g[l] \in D^b_c(Y')
\]
satisfies
\[
\varphi_g(\gamma) \left({^\mathbf{p}\tau_{\leq i}} Rf'_* \varphi_g\right) \subset {^\mathbf{p}\tau_{\leq i+(c-l)}} (Rf'_* \varphi_g[l]), \quad \forall i.
\]
On the other hand, using the isomorphism (\ref{4.1_0}) and Lemma \ref{lem1.3}, the above equation means precisely that the morphism $i^*\gamma : \BQ_{X'} \to \BQ_{X'}[l]$ satisfies
\begin{equation*}
(i^*\gamma) \left({^\mathbf{p}\tau_{\leq i} Rf'_* \BQ_{X'}}\right) \subset {^\mathbf{p}\tau_{\leq i+(c-l)}} (Rf'_* \BQ_{X'}[l]), \quad \forall i;
\end{equation*}
that is, the class $i^*\gamma$ has strong perversity $c$ with respect to $f'$.
\end{proof}

\section{Strong perversity for Chern classes}

In this section, we fix the rank $n$ and the degree $d$ with $(n,d)=1$. In Theorem \ref{conj2.7}, we rephrase and then enhance Conjecture \ref{conj} to a statement involving $\CL$-twisted Hitchin systems and strong perversity of Chern classes. It will be proven in Sections 3 and 4.

\subsection{Tautological classes}

As discussed in \cite[Section 0.3]{dCMS}, the $P=W$ conjecture for $\mathrm{GL}_n$ can be reduced to a statement involving tautological classes on $M_{\mathrm{Dol}}$ and the perverse filtration associated with $h: M_{\mathrm{Dol}} \to A$, without reference to the Betti moduli space $M_B$. In this subsection, we recall this reduction step.

For convenience, we work with the $\mathrm{PGL}_n$ Dolbeault moduli space to avoid normalization of a universal family as in \cite{dCMS}; we refer to \cite{Shende} for a detailed discussion concerning the formulation of the $P=W$ conjecture in terms of tautological classes for the $\mathrm{PGL}_n$ Dolbeault moduli space.

Fix $\CN \in \mathrm{Pic}^d(C)$. Let $\widecheck{M}_{\mathrm{Dol}}$ be 
the moduli stack of stable Higgs bundles $(\CE, \theta)$ with $\mathrm{det}(\CE) \simeq \CN$ and $\mathrm{trace}(\theta) = 0$, rigidified with respect to the generic $\mu_n$-stabilizer; this is the same as taking its coarse moduli space.  We refer to this (nonsingular) variety as the $\mathrm{SL}_n$ Dolbeault moduli space of degree $d$. The finite group $\Gamma: = \mathrm{Pic}^0(C)[n]$ acts naturally on $\widecheck{M}_{\mathrm{Dol}}$ via tensor product. The $\mathrm{PGL}_n$ Dolbeault moduli space of degree $d$ is recovered by taking the quotient stack
\begin{equation}\label{233}
\widehat{M}_{\mathrm{Dol}} : = \widecheck{M}_{\mathrm{Dol}}/\Gamma
\end{equation}
which is a nonsingular Deligne--Mumford stack. The $\mathrm{PGL}_n$ Hitchin system 
\[
\widehat{h}: \widehat{M}_{\mathrm{Dol}} \to \widehat{A}: = \bigoplus_{i=2}^n H^0(C, {\Omega_C^1}^{\otimes i})
\]
is induced by the Hitchin map associated with $\widecheck{M}_{\mathrm{Dol}}$ as the $\Gamma$-action is fiberwise with respect to $h$. Analogous to the $\mathrm{GL}_n$  case, we have the perverse filtration $P_*H^*(\widehat{M}_{\mathrm{Dol}}, \BQ)$ associated with $\widehat{h}$. The universal $\mathrm{PGL}_n$-bundle $\CU$ on $C \times \widehat{M}_{\mathrm{Dol}}$ induces Chern characters
\[
\mathrm{ch}_k(\CU) \in H^{2k}(C \times \widehat{M}_{\mathrm{Dol}}, \BQ),\quad k \geq 2. 
\]
The tautological classes $c_k(\gamma)$ are defined to be
\[
c_k(\gamma): = \int_\gamma \mathrm{ch}_k(\CU) = q_{M*}(q_C^*\gamma \cup \mathrm{ch}_k(\CU)) \in H^*(\widehat{M}_{\mathrm{Dol}}, \BQ), \quad \gamma \in H^*(C,\BQ),
\]
where $q_{(-)}$ are the projections from $C \times \widehat{M}_{\mathrm{Dol}}$.

Now we consider the $\mathrm{PGL}_n$ Betti moduli space of degree $d$ with the isomorphism on cohomology provided by non-abelian Hodge theory (\emph{c.f.}\cite[Theorem 1.2.4]{dCHM1}):
\begin{equation}\label{NAH2}
H^*(\widehat{M}_{\mathrm{Dol}}, \BQ) = H^*(\widehat{M}_{B}, \BQ).
\end{equation}
Define the Hodge sub-vector space
\[
^k\mathrm{Hdg}^m(\widehat{M}_B): = W_{2k}H^m(M_B, \BQ) \cap F^kH^m(M_B, \BC) \subset H^m(\widehat{M}_{B}, \BQ).
\]

The following theorem, collecting results of Markman and Shende, provides a complete description of $H^*(\widehat{M}_B, \BQ)$ in terms of the Chern classes $\mathrm{ch}_k(\CU)$ and the weight filtration.

\begin{thm}[\cite{Markman, Shende}]\label{thm2.1}
We use the same notation as above.
\begin{enumerate}
    \item[(i)] The tautological classes $c_k(\gamma)\in H^*(\widehat{M}_{\mathrm{Dol}}, \BQ)$ generate $H^*(\widehat{M}_\mathrm{Dol}, \BQ)$ as a $\BQ$-algebra.
    \item[(ii)] The class $c_k(\gamma)$, passing through the non-abelian Hodge correspondence (\ref{NAH2}), lies in $^k\mathrm{Hdg}^*(\widehat{M}_B)$. In particular, we have a canonical decomposition
    \[
H^*(\widehat{M}_B, \BQ) = \bigoplus_{m,k} {^k\mathrm{Hdg}^m(\widehat{M}_B)}.    \]
\end{enumerate}
\end{thm}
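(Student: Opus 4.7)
The strategy is to treat parts (i) and (ii) separately, invoking Markman's generation theorem and Shende's weight analysis respectively, and then deduce the canonical decomposition.

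For (i), the plan is to apply Markman's generation theorem \cite{Markman} for tautological classes on moduli of sheaves on holomorphic symplectic surfaces. Via the spectral correspondence, stable rank-$n$ Higgs bundles on $C$ with fixed determinant correspond to stable pure one-dimensional sheaves on the holomorphic symplectic surface $T^*C$ with fixed determinant, so the $\mathrm{SL}_n$ moduli $\widecheck{M}_{\mathrm{Dol}}$ identifies with an open subset of such a moduli space. Markman's theorem then says that $H^*(\widecheck{M}_{\mathrm{Dol}}, \BQ)$ is generated as a $\BQ$-algebra by the Künneth components of the Chern classes of the universal sheaf. To descend to the $\mathrm{PGL}_n$ moduli via the finite quotient (\ref{233}), I would use $H^*(\widehat{M}_{\mathrm{Dol}}, \BQ) = H^*(\widecheck{M}_{\mathrm{Dol}}, \BQ)^{\Gamma}$ and match the $c_k(\gamma)$'s arising from the $\mathrm{PGL}_n$ universal bundle $\CU$ with the $\Gamma$-invariant tautological classes on $\widecheck{M}_{\mathrm{Dol}}$, up to a normalization accounting for the trace-shift between $\mathrm{SL}_n$ and $\mathrm{PGL}_n$ Chern characters.

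For (ii), after transporting via (\ref{NAH2}), I would verify the two inclusions defining $^k\mathrm{Hdg}^m$ separately. The Hodge-filtration inclusion $c_k(\gamma) \in F^kH^m(\widehat{M}_B, \BC)$ follows from the compatibility of non-abelian Hodge theory \cite{Simp, Si1994II} with the Hodge filtration: $\mathrm{ch}_k(\CU)$ is a Hodge class of type $(k,k)$ on the smooth quasi-projective variety $C \times \widehat{M}_{\mathrm{Dol}}$, and slant-product with $\gamma \in H^*(C, \BQ)$ preserves the $F^k$-level under Simpson's comparison. The weight inclusion $c_k(\gamma) \in W_{2k}H^m(\widehat{M}_B, \BQ)$ is Shende's theorem \cite{Shende}, proved by realizing $c_k(\gamma)$ as a restriction of a Chern class coming from a smooth partial compactification over which a suitable extension of $\CU$ exists; restriction from a proper smooth ambient then forces weight at most $2k$. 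The canonical direct-sum decomposition is then a formal consequence of (i) together with the bidegree rigidity: by (i) every class is a $\BQ$-polynomial in the $c_k(\gamma)$'s, and by the two containments each monomial of cohomological degree $m$ with tautological weight $k$ lies in exactly one piece $^k\mathrm{Hdg}^m$, using that the mixed Hodge structure of $\widehat{M}_B$ is of Hodge-Tate type.

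The main obstacle is the weight inclusion in (ii). The Dolbeault-side Hodge filtration bound is essentially automatic from smoothness of $\widehat{M}_{\mathrm{Dol}}$, but the weight filtration on the affine character variety $\widehat{M}_B$ is subtle and not manifestly pulled back from a proper ambient; producing the requisite compactification, or equivalently explicit geometric cycle representatives for $c_k(\gamma)$ on a smooth proper variety, is the technically delicate step which Shende carries out.
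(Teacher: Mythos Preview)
Your proposal is correct and identifies the same sources as the paper: the paper's entire proof is the two-sentence attribution ``The first part was proven in \cite{Markman}, and the second part was proven in \cite{Shende}.'' You go further by sketching the content of those references, which the paper does not do; your outline for (i) via the spectral correspondence and for the decomposition in (ii) via Hodge--Tate type is accurate, though your description of Shende's mechanism for the weight bound (restriction from a smooth partial compactification) is not quite his actual argument---he works directly on the Betti side, using that the tautological classes are pulled back from Chern classes on $[\mathrm{pt}/\mathrm{GL}_n]$ where the weights are evident.
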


\begin{proof}
The first part was proven in \cite{Markman}, and the second part was proven in \cite{Shende}.
\end{proof}

Theorem \ref{thm2.1} (ii) yields immediately that 
\[
W_{2k}H^m(\widehat{M}_B, \BQ) = W_{2k+1}H^m(\widehat{M}_B, \BQ).
\]
Moreover, by Theorem \ref{thm2.1}, the $P=W$ conjecture implies that each class $\prod_{i=1}^s c_{k_i}(\gamma_i)$ lies in the perverse piece $P_{\Sigma_i{k_i}}H^*(\widehat{M}_{\mathrm{Dol}}, \BQ)$; the latter is in fact equivalent to the $P=W$ conjecture. Indeed, suppose we have 
\begin{equation}\label{taut}
\prod_{i=1}^s c_{k_i}(\gamma_i) \in P_{\Sigma_i{k_i}}H^*(\widehat{M}_{\mathrm{Dol}}, \BQ)
\end{equation}
for any product of tautological classes.
Then we know that $W_{2k}H^*(\widehat{M}_B, \BQ) \subset P_kH^*(\widehat{M}_{\mathrm{Dol}}, \BQ)$. The curious hard Lefschetz theorem proven by Mellit \cite{Mellit} forces the two filtrations $P_\bullet$ and $W_{2\bullet}$ to coincide as long as one contains the other. This was mentioned in the last paragraph of \cite[Section 1]{Mellit}; we include its proof here for the reader's convenience.

\begin{lem}
We denote by $V$ the $\BQ$-vector space (\ref{NAH2}) with the perverse and the weight filtrations $P_\bullet$ and $W_{2\bullet}$. If $W_{2k} \subset P_k$ for all $k$, then $W_{2k} = P_k$ for all $k$.
\end{lem}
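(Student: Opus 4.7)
The strategy is to combine two hard-Lefschetz-type symmetries on $V$ --- the relative hard Lefschetz for the Hitchin map on $\widehat{M}_{\mathrm{Dol}}$ and Mellit's curious hard Lefschetz on $\widehat{M}_B$ --- with the equality of Poincar\'e polynomials supplied by non-abelian Hodge theory, in order to promote the containment $W_{2k} \subset P_k$ to an equality of graded dimensions. Once $\dim \mathrm{Gr}^W_{2k} H^m = \dim \mathrm{Gr}^P_k H^m$ for every $k$ and $m$, the inclusion of filtrations upgrades immediately to equality.

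Set $w_j^m = \dim \mathrm{Gr}^W_{2j} H^m$, $p_j^m = \dim \mathrm{Gr}^P_j H^m$, and $r = \dim \widehat A$. Relative hard Lefschetz for $\widehat h : \widehat M_{\mathrm{Dol}} \to \widehat A$ yields the symmetry
\[
p_j^m \;=\; p_{2r-j}^{m + 2(r-j)},
\]
and Mellit's curious hard Lefschetz for $\widehat M_B$ gives the \emph{same} symmetry for $w_j^m$ with the same center $r$, because the curious Lefschetz is centered at weight $\dim_\BC \widehat M_B = 2r$. Consequently both sequences are supported in $j \in [0,2r]$, and their totals $\sum_j w_j^m = \dim H^m = \sum_j p_j^m$ agree by NAH. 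The hypothesis $W_{2k} \subset P_k$ translates into the partial-sum inequality $S_k^m := \sum_{j \leq k}\bigl(p_j^m - w_j^m\bigr) \geq 0$ for all $k, m$.

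Writing $a_j^m := p_j^m - w_j^m$, I plan to prove $a_j^m = 0$ by an induction that peels off the two boundary layers simultaneously. At step $k < r$, the inductive hypothesis ($a_j^m = 0$ for $j < k$ or $j > 2r-k$, all $m$) collapses $S_k^m$ to $a_k^m \geq 0$, and the shared symmetry propagates this to $a_{2r-k}^{m} \geq 0$ for all $m$. On the other hand, the vanishing of the total together with the hypothesis gives $S_{2r-k-1}^m = -a_{2r-k}^m$, and the partial-sum bound $S_{2r-k-1}^m \geq 0$ forces $a_{2r-k}^m \leq 0$. Combining the two bounds yields $a_{2r-k}^m = 0$ and hence $a_k^m = 0$; the middle case $k = r$ is immediate from $\sum_j a_j^m = 0$. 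The only genuinely delicate point is verifying that the two hard-Lefschetz symmetries share the same center at $j = r$: perverse HL centers at the defect of semismallness $r = \dim \widehat A$, while curious HL centers at $\tfrac12 \dim_\BC \widehat M_B = \dim \widehat A$ as well. This numerical alignment is the arithmetic shadow of the $P=W$ prediction itself, and it is precisely what allows the boundary-inward cancellation to close up.
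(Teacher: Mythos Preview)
Your proof is correct and follows essentially the same approach as the paper. Both arguments combine the relative hard Lefschetz symmetry for $P_\bullet$ with Mellit's curious hard Lefschetz for $W_{2\bullet}$, exploit that they are centered at the same index, and run an induction from the outer filtration pieces inward; you track the cohomological degree $m$ explicitly and work with graded dimensions $p_j^m, w_j^m$, while the paper sums over all degrees and argues directly with $\dim W_{2k}$ and $\dim P_k$ inside the total space $V$, but the mechanism is identical.
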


\begin{proof}
Assume $r = \mathrm{dim}\widehat{M}$; both filtrations terminate at the $r$-th pieces, \emph{i.e.}, $W_{2r} = P_{r} = V$. We first show that 
\[
W_0 = P_0, \quad W_{2(r-1)} = P_{r-1}. 
\]
In fact, since $W_\bullet  \subset P_\bullet$, we have
\[
\mathrm{dim} W_0 \leq \mathrm{dim} P_0 = \mathrm{dim}V/P_{r-1} \leq \mathrm{dim} V/W_{2(r-1)};
\]
moreover, by the curious hard Lefschetz theorem, each inequality has to be an equality. So our claim follows. 

Then we proceed by applying the same argument to $W_1, P_1$ and $W_{2(r-2)}, P_{r-1}$. The lemma follows by a simple induction.
\end{proof}

In conclusion, we have reduced the $P=W$ conjecture to the following:

\begin{conj}[Equivalent version of P=W]\label{conj2.2}
Condition (\ref{taut}) holds for all products of tautological classes.
\end{conj}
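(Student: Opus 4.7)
The plan is to lift Conjecture \ref{conj2.2} to a statement of \emph{strong perversity}: each tautological class $c_k(\gamma) \in H^*(\widehat M_{\mathrm{Dol}},\BQ)$ has strong perversity $k$ with respect to the $\mathrm{PGL}_n$ Hitchin map $\widehat h$. Granting this, Lemma \ref{lem1.2} implies that any product $\prod_{i=1}^s c_{k_i}(\gamma_i)$ has strong perversity $\sum_i k_i$; applying Lemma \ref{lem1.1} to the unit class $1\in P_0H^0(\widehat M_{\mathrm{Dol}},\BQ)$ then yields the containment $\prod_i c_{k_i}(\gamma_i)\in P_{\sum_i k_i}H^*(\widehat M_{\mathrm{Dol}},\BQ)$ required by Conjecture \ref{conj2.2}. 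Equivalently, the strategy is to prove the pointwise statement that the morphism of complexes induced by each Chern character $\mathrm{ch}_k(\CU)$ on the derived pushforward $R\widehat h_*\BQ$ shifts the perverse truncation in the tightest possible way.

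\textbf{Step A: twisted reduction via vanishing cycles.} Following the Ng\^o--Chaudouard--Laumon philosophy invoked in the introduction, I would realize $\widehat M_{\mathrm{Dol}}$ as the critical locus of a natural function of the form $\nu\circ\widetilde h\colon \widetilde X\to\BA^1$, where $\widetilde X$ is an ambient moduli space of $\CL$-twisted (meromorphic) Higgs bundles for a line bundle $\CL$ of sufficiently positive degree, $\widetilde h$ is the corresponding $\CL$-twisted Hitchin map, and $\nu$ is an appropriate linear form on the twisted Hitchin base that vanishes on the classical part. The vanishing cycle sheaf $\varphi_{\nu\circ\widetilde h}$ should be identified, up to shift, with the IC sheaf of $\widehat M_{\mathrm{Dol}}$, placing us exactly in the setting of Proposition \ref{prop1.4} with $f=\widetilde h$, $f'=\widehat h$. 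Since each $c_k(\gamma)$ is the restriction of a natural tautological lift $\widetilde c_k(\gamma)$ on $\widetilde X$, Proposition \ref{prop1.4} reduces the strong perversity of $c_k(\gamma)$ with respect to $\widehat h$ to the strong perversity of $\widetilde c_k(\gamma)$ with respect to the twisted Hitchin map $\widetilde h$.

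\textbf{Step B: global Springer theory over the elliptic locus.} On the twisted side the decomposition theorem is considerably more rigid. Passing to the parabolic twisted Hitchin space $\widetilde X^{\mathrm{par}}$ in the sense of Yun, I would exploit his global Springer theory, which produces an affine Weyl group action on the direct image of the IC sheaf under the parabolic Hitchin map and whose behaviour is well understood over the elliptic locus of the twisted Hitchin base. Using these symmetries, together with a degree count matching Chern classes to the natural grading on the affine Springer fibres, one shows that the operator induced by $\widetilde c_k(\gamma)$ lands in the expected perverse degree over the elliptic locus; this establishes the strong perversity of $\widetilde c_k(\gamma)$ generically on the Hitchin base.

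\textbf{Step C: parabolic support theorem.} The main obstacle is to propagate the conclusion of Step B from the elliptic locus to the entire twisted Hitchin base. The plan is to prove a parabolic analogue of the Chaudouard--Laumon support theorem, constraining the full supports of the simple summands of the decomposition theorem for the parabolic twisted Hitchin map: any summand not supported on the whole base must come from (the closure of) a piece compatible with the elliptic stratification. Combined with the strict top-piece analysis of the tautological operators, this forces the strong perversity condition (\ref{assumption}) to hold globally once it holds over the elliptic locus. Feeding the resulting statement for $\widetilde c_k(\gamma)$ back through Step A gives the strong perversity of $c_k(\gamma)$ on $\widehat M_{\mathrm{Dol}}$, and thus Conjecture \ref{conj2.2}. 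Establishing this parabolic support theorem in the generality needed here—this is the content of Section 4—is the hardest and most technical part of the argument, as it requires simultaneously controlling the monodromy of the parabolic family and the interaction of global Springer symmetries with the supports.
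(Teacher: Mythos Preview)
Your proposal is essentially the paper's own strategy, and the high-level architecture (vanishing cycle reduction to the twisted case, global Springer theory, parabolic support theorem) is exactly right. A few points where the paper's execution differs from your sketch are worth flagging.

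First, the paper does not prove strong perversity of each individual $c_k(\gamma)\in H^*(\widehat M_{\mathrm{Dol}},\BQ)$ with respect to $\widehat h$. Instead it works upstairs on $C\times \widehat M_{\mathrm{Dol}}$ and shows that $\mathrm{ch}_k(\CU)\in H^{2k}(C\times \widehat M_{\mathrm{Dol}},\BQ)$ has strong perversity $k$ with respect to $\mathbf h=\mathrm{id}\times\widehat h$ (Theorem \ref{conj2.7} for $\CL=\CO_C$). The K\"unneth description $P_kH^*(C\times\widehat M_{\mathrm{Dol}},\BQ)=H^*(C,\BQ)\otimes P_kH^*(\widehat M_{\mathrm{Dol}},\BQ)$ is then used only at the cohomological level to conclude $c_k(\gamma)\cup-:P_s\to P_{s+k}$, without ever asserting the sheaf-theoretic strong perversity of $c_k(\gamma)$ itself on $\widehat M_{\mathrm{Dol}}$. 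This packaging is what makes the vanishing cycle transfer and the Springer argument clean, since both are naturally statements about the universal Chern character on the product.

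Second, in Step B the operative mechanism is not an affine Weyl group action or a degree count on affine Springer fibers. The three ingredients are: (A) the $B$-reduction on $\widehat M^{\mathrm{par}}$ splits $\pi^*\CU$ into Chern roots $c_1(L(\xi))$; (B) each $c_1(L(\xi))$ has strong perversity $1$, a statement Yun proves over a dense open and which the support theorem (full support, not merely ``compatible with the elliptic stratification'') propagates to all of $C\times\widehat A$; (C) the \emph{finite} Weyl group $W\simeq\mathfrak S_n$ acts on $R\pi_*\BQ$ with invariants $\BQ_{C\times\widehat M}$, so strong perversity of $\pi^*\mathrm{ch}_k(\CU)$ descends to $\mathrm{ch}_k(\CU)$.

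Third, in Step C the paper proves the stronger statement that the parabolic Hitchin map has \emph{full} support (Theorem \ref{thm3.2}); this is what makes the extension from the elliptic locus automatic, since a morphism of IC sheaves with full support is determined by its restriction to any dense open.
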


\subsection{Strong perversity for Chern classes for $\CL$-twisted Hitchin systems}

For our purposes, it is important to consider Dolbeault moduli spaces of Higgs bundles, twisted by an effective line bundle $\CL$ (\emph{i.e.}, $H^0(C, \CL) \neq 0$). These moduli spaces have already appeared in \cite{CL, HT2, HT, Yun1, Yun2}; we review the construction here briefly. 

Set $\Omega_\CL$ to be the line bundle $\Omega^1_C \otimes \CL$ on the curve $C$. We denote by $M^\CL_{\mathrm{Dol}}$ the moduli space of stable twisted Higgs bundles
\[
(\CE, \theta), \quad \theta: \CE \to \CE \otimes\Omega_\CL, \quad \mathrm{rk}(\CE) = n, ~~\mathrm{deg}(\CE)=d,
\]
with respect to the slope stability condition. The corresponding Hitchin map
\[
h^\CL: M^\CL_{\mathrm{Dol}} \to A^\CL: = \bigoplus_{i=1}^n H^0\left(C, {\Omega_\CL}^{\otimes i} \right) ,\quad (\CE, \theta) \mapsto \mathrm{char.polynomial}(\theta),
\]
is still proper as in the untwisted case; but it fails to be a Lagrangian fibration when $\mathrm{deg}(\CL)>0$. The $\CL$-twisted $\mathrm{SL}_n$ and $\mathrm{PGL}_n$ Dolbeault moduli spaces $\widecheck{M}^\CL_{\mathrm{Dol}}$ and $\widehat{M}^\CL_{\mathrm{Dol}}$ can be constructed similarly. The moduli space
\[
\widecheck{M}^\CL_{\mathrm{Dol}}:= \{(\CE, \theta) \in M^\CL_{\mathrm{Dol}}|~~ \mathrm{det}(\CE) \simeq \CN \in \mathrm{Pic}^d(C), \mathrm{trace}(\theta) = 0\}
\]
admits a Hitchin map
\[
\widecheck{h}^\CL: \widecheck{M}^\CL_{\mathrm{Dol}} \to \widehat{A}^\CL:= \bigoplus_{i=2}^nH^0\left(C, {\Omega_\CL}^{\otimes i} \right)\]
and a fiberwise $\Gamma = \mathrm{Pic}^0(C)[n]$ action by tensor product. Taking the $\Gamma$-quotient recovers the $\CL$-twisted $\mathrm{PGL}_n$ Hitchin map
\[
\widehat{h}^\CL: \widehat{M}^\CL_{\mathrm{Dol}} =\widecheck{M}^\CL_{\mathrm{Dol}}/\Gamma \to \widehat{A}^\CL. 
\]

An observation in \cite[Section 4]{MS} is that, for a fixed closed point $p\in C$, the $\CL$-twisted and $\CL(p)$-twisted $\mathrm{SL}_n$ Dolbeault moduli spaces can be related via critical loci and vanishing cycles, which we recall in the following.

By viewing a $\CL$-twisted Higgs bundle naturally as a $\CL(p)$-twisted Higgs bundle, we have the natural embedding $i: \widecheck{M}^\CL_{\mathrm{Dol}} \hookrightarrow \widecheck{M}^{\CL(p)}_{\mathrm{Dol}}$ which induces the commutative diagram
\begin{equation}\label{2.3_1}
\begin{tikzcd}
\widecheck{M}^\CL_{\mathrm{Dol}}\arrow[r, hook,"i"] \arrow[d, "\widecheck{h}^\CL"]
&\widecheck{M}^{\CL(p)}_{\mathrm{Dol}} \arrow[d, "\widecheck{h}^{\CL(p)}"] \\
\widehat{A}^\CL \arrow[r,hook]
& \widehat{A}^{\CL(p)}.
\end{tikzcd}
\end{equation}

We recall the following theorem from \cite{MS}.

\begin{thm}[\cite{MS} Theorem 4.5]\label{thm2.5}
There exists a regular function $g: \widecheck{M}^{\CL(p)}_{\mathrm{Dol}} \to \BA^1$ factorized as $g = \nu \circ \widecheck{h}^{\CL(p)}$ with $\nu: \widehat{A}^{\CL(p)} \to \BA^1$ such that
\[
\varphi_g \simeq \mathrm{IC}_{\widecheck{M}^{\CL}_{\mathrm{Dol}}} = \BQ_{\widecheck{M}^{\CL}_{\mathrm{Dol}}}[\mathrm{dim} \widecheck{M}^{\CL}_{\mathrm{Dol}}].
\]
\end{thm}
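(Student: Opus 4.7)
The plan is to build $\nu$ so that it detects the ``extra'' principal part at $p$ and has a Morse--Bott critical locus along $\widehat{A}^\CL \subset \widehat{A}^{\CL(p)}$, and then lift this Morse--Bott structure through the Hitchin map so that $\varphi_g$ can be computed by a local Morse argument. For each $i = 2, \dots, n$, the short exact sequence
\[
0 \to \Omega_\CL^{\otimes i} \to \Omega_{\CL(p)}^{\otimes i} \to Q_i \to 0,
\]
with $Q_i$ a skyscraper at $p$ of length $i$, induces a linear map $H^0(C, \Omega_{\CL(p)}^{\otimes i}) \to H^0(Q_i) \cong \BC^i$. Provided $\deg \CL$ is sufficiently positive (which is harmless, as the perversity statements of Section 2 are insensitive to such twists), this map is surjective with kernel $H^0(C, \Omega_\CL^{\otimes i})$. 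Summing over $i$ produces a linear surjection $\mathrm{pr}: \widehat{A}^{\CL(p)} \twoheadrightarrow W := \bigoplus_{i=2}^n \BC^i$ whose kernel is exactly $\widehat{A}^\CL$. Fix a non-degenerate quadratic form $Q$ on $W$ and set $\nu := Q \circ \mathrm{pr}$; by construction $\nu$ has $\widehat{A}^\CL$ as its Morse--Bott critical locus, with Hessian non-degenerate in every normal direction.

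Set $g := \nu \circ \widecheck{h}^{\CL(p)}$ and analyze its critical locus. The differential $dg$ at $(\CE,\theta)$ factors as $d\nu \circ d\widecheck{h}^{\CL(p)}$, and $d\nu$ vanishes precisely along $\widehat{A}^\CL$. Using the deformation-theoretic description of $d\widecheck{h}^{\CL(p)}$ via the trace on the deformation complex $\mathrm{End}_0(\CE) \xrightarrow{[\theta,\cdot]} \mathrm{End}_0(\CE)\otimes \Omega_{\CL(p)}$, one checks that the critical locus of $g$ equals $(\widecheck{h}^{\CL(p)})^{-1}(\widehat{A}^\CL)$ scheme-theoretically. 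A stable $\CL(p)$-twisted Higgs bundle whose characteristic polynomial lies in $\widehat{A}^\CL$ has its spectral curve contained in the total space of $\Omega_\CL \subset \Omega_{\CL(p)}$, forcing $\theta$ to factor through $\CE \otimes \Omega_\CL$; hence this preimage is exactly $\widecheck{M}^\CL_{\mathrm{Dol}}$.

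To finish, compute $\varphi_g$: in analytic local coordinates near $\widecheck{M}^\CL_{\mathrm{Dol}} \subset \widecheck{M}^{\CL(p)}_{\mathrm{Dol}}$, the Hitchin differential maps the normal bundle of $\widecheck{M}^\CL_{\mathrm{Dol}}$ surjectively onto the normal bundle of $\widehat{A}^\CL \subset \widehat{A}^{\CL(p)}$, so $g$ is locally pulled back from a non-degenerate quadratic form on the normal directions. The classical Morse lemma for vanishing cycles then yields
\[
\varphi_g \simeq \BQ_{\widecheck{M}^\CL_{\mathrm{Dol}}}[\dim \widecheck{M}^\CL_{\mathrm{Dol}}] = \mathrm{IC}_{\widecheck{M}^\CL_{\mathrm{Dol}}},
\]
up to a Tate twist absorbed by working with rational coefficients.

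The hard part, I expect, is making the normal-direction surjectivity and Hessian non-degeneracy work \emph{globally}, including at Higgs bundles over singular Hitchin fibers where $\widecheck{h}^{\CL(p)}$ fails to be smooth. Concretely this reduces to showing $\mathrm{pr} \circ d\widecheck{h}^{\CL(p)}$ is surjective at every point of $\widecheck{M}^\CL_{\mathrm{Dol}}$, which should follow from a cohomological vanishing along spectral curves in the sense of Beauville--Narasimhan--Ramanan, possibly bootstrapped from the case $\deg \CL \gg 0$ and then transported by deformation invariance of the vanishing cycle formation.
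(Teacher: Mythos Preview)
There is a genuine error in the middle of your argument: the identification
\[
(\widecheck{h}^{\CL(p)})^{-1}(\widehat{A}^\CL)\;=\;\widecheck{M}^{\CL}_{\mathrm{Dol}}
\]
is false. If $(\CE,\theta)\in \widecheck{M}^{\CL(p)}_{\mathrm{Dol}}$ has \emph{nilpotent but nonzero} residue $\bar\theta_p\in\mathrm{End}_0(\CE_p)$, then every coefficient of the characteristic polynomial vanishes to the required order at $p$ (so $\widecheck{h}^{\CL(p)}(\CE,\theta)\in\widehat{A}^\CL$), yet $\theta$ does \emph{not} factor through $\CE\otimes\Omega_\CL$. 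Your spectral-curve justification breaks down because the natural map $\mathrm{Tot}(\Omega_\CL)\to\mathrm{Tot}(\Omega_{\CL(p)})$ is not a closed immersion: over $p$ it contracts the fiber to the origin, so the $\Omega_\CL$-spectral curve is only a partial normalization of the $\Omega_{\CL(p)}$-spectral curve, and torsion-free sheaves on the latter need not descend. A dimension count makes this visible: the Hitchin fibers of $\widecheck{h}^{\CL(p)}$ are $\tfrac{n(n-1)}{2}$ larger than those of $\widecheck{h}^{\CL}$, so $(\widecheck{h}^{\CL(p)})^{-1}(\widehat{A}^\CL)$ exceeds $\widecheck{M}^{\CL}_{\mathrm{Dol}}$ in dimension by $\tfrac{n(n-1)}{2}$. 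Consequently your Morse--Bott computation, even if it went through, would produce a vanishing cycle sheaf supported on the wrong (larger, singular) subvariety and with the wrong shift: the Hessian of your $g$ along $\widecheck{M}^{\CL}_{\mathrm{Dol}}$ has rank at most $\dim W=\tfrac{n(n+1)}{2}-1$, whereas the codimension of $\widecheck{M}^{\CL}_{\mathrm{Dol}}$ in $\widecheck{M}^{\CL(p)}_{\mathrm{Dol}}$ is $n^2-1$.

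The construction in \cite{MS} places the quadratic form in a different spot. There $\nu$ is taken to be \emph{linear} on $\widehat{A}^{\CL(p)}$, essentially the evaluation $a\mapsto a_2(p)$, so that $g(\CE,\theta)$ equals a nonzero multiple of the Killing form $\mathrm{tr}(\bar\theta_p^{\,2})$ of the residue. The point is that $g$ is pulled back, via the evaluation map $\mathrm{ev}_p:\widecheck{M}^{\CL(p)}_{\mathrm{Dol}}\to [\mathfrak{g}/G]$ at $p$, from a $G$-invariant nondegenerate quadratic form on $\mathfrak{g}$. Since $\mathrm{ev}_p$ is \emph{smooth} (\cite[Proposition~4.1]{MS}), one has $\mathrm{Crit}(g)=\mathrm{ev}_p^{-1}(0)=\widecheck{M}^{\CL}_{\mathrm{Dol}}$ and $\varphi_g\simeq \mathrm{ev}_p^{*}\varphi_{\mathrm{tr}(-)^2}$ up to shift, which is immediately the shifted constant sheaf on the critical locus. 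In short: the nondegenerate quadratic structure must live on $\mathfrak{g}$ (dimension $n^2-1$), not on the GIT quotient $\mathfrak{g}/\!\!/G\cong W$ (dimension $\tfrac{n(n+1)}{2}-1$); putting it on $W$, as you do, loses precisely the nilpotent directions and cannot detect $\bar\theta_p=0$.
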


Since the embedding $i: \widecheck{M}^\CL_{\mathrm{Dol}} \hookrightarrow \widecheck{M}^{\CL(p)}_{\mathrm{Dol}}$ is $\Gamma$-equivariant, taking $\Gamma$-quotients in the diagram (\ref{2.3_1}) yields the following commutative diagram:
\begin{equation}\label{diagram3}
\begin{tikzcd}
\widehat{M}^\CL_{\mathrm{Dol}}\arrow[r, hook, "\widehat{i}"] \arrow[d, "\widehat{h}^\CL"]
&\widehat{M}^{\CL(p)}_{\mathrm{Dol}} \arrow[d, "\widehat{h}^{\CL(p)}"] \\
\widehat{A}^\CL \arrow[r, hook]
& \widehat{A}^{\CL(p)}.
\end{tikzcd}
\end{equation}

\begin{prop}\label{prop2.6}
The vanishing cycle complex $\varphi_{\widehat{g}}$ associated with the regular function
\[
\widehat{g}: = \nu \circ \widehat{h}^{\CL(p)}: \widehat{M}^{\CL(p)}_{\mathrm{Dol}} \to \BA^1
\]
satisfies
\[
\varphi_{\widehat{g}} \simeq \mathrm{IC}_{\widehat{M}^{\CL}_{\mathrm{Dol}}} = \BQ_{\widehat{M}^{\CL}_{\mathrm{Dol}}}\left[\mathrm{dim} \widehat{M}^{\CL}_{\mathrm{Dol}}\right].\]
\end{prop}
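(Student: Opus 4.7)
The plan is to deduce Proposition \ref{prop2.6} from Theorem \ref{thm2.5} by taking the $\Gamma$-quotient, exploiting the fact that vanishing cycles commute with étale pullback. Write $\pi\colon \widecheck{M}^{\CL(p)}_{\mathrm{Dol}} \to \widehat{M}^{\CL(p)}_{\mathrm{Dol}}$ for the quotient map, and similarly $\pi'\colon \widecheck{M}^{\CL}_{\mathrm{Dol}} \to \widehat{M}^{\CL}_{\mathrm{Dol}}$. Since $\Gamma = \mathrm{Pic}^0(C)[n]$ is a finite group acting on a nonsingular variety, each of these is a representable étale $\Gamma$-torsor between nonsingular Deligne--Mumford stacks. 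By construction $\widehat{g}\circ\pi = g$, because $\widecheck{h}^{\CL(p)} = \widehat{h}^{\CL(p)}\circ\pi$ under the diagram (\ref{2.3_1}) and $\nu$ is a function on $\widehat{A}^{\CL(p)}$ common to both definitions.

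First I would observe that, since $\pi$ is étale, the base change property for the vanishing cycle functor yields a canonical isomorphism
\[
\pi^*\varphi_{\widehat{g}} \simeq \varphi_g
\]
in $\mathrm{Perv}(\widecheck{M}^{\CL}_{\mathrm{Dol}})$, with no cohomological shift since the relative dimension of $\pi$ is zero (so the internal shift $[\mathrm{dim}\,\widehat{M}^{\CL(p)}_{\mathrm{Dol}}]$ built into $\varphi_{\widehat{g}}$ matches the shift built into $\varphi_g$). Combined with Theorem \ref{thm2.5}, this identifies $\pi^*\varphi_{\widehat{g}}$ with $\mathrm{IC}_{\widecheck{M}^{\CL}_{\mathrm{Dol}}}$. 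Since $\pi'$ is étale of relative dimension zero as well, pullback along $\pi'$ preserves IC sheaves, giving $\pi^*\mathrm{IC}_{\widehat{M}^{\CL}_{\mathrm{Dol}}} \simeq \mathrm{IC}_{\widecheck{M}^{\CL}_{\mathrm{Dol}}}$. Thus I obtain an isomorphism $\pi^*\varphi_{\widehat{g}} \simeq \pi^*\mathrm{IC}_{\widehat{M}^{\CL}_{\mathrm{Dol}}}$, and what remains is to descend it to the quotient stack.

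For the descent, I would invoke the equivalence between perverse sheaves on $\widehat{M}^{\CL(p)}_{\mathrm{Dol}}$ and $\Gamma$-equivariant perverse sheaves on $\widecheck{M}^{\CL(p)}_{\mathrm{Dol}}$, which is valid because $\pi$ is a $\Gamma$-torsor. Both $\varphi_{\widehat{g}}$ and $\mathrm{IC}_{\widehat{M}^{\CL}_{\mathrm{Dol}}}$ carry tautological $\Gamma$-equivariant structures on their pullbacks. The main, and essentially only, point I would need to verify is that the isomorphism produced by Theorem \ref{thm2.5} can be chosen $\Gamma$-equivariantly — this is the principal obstacle. It should follow from the canonicality of the construction in \cite{MS}: the function $g = \nu\circ\widecheck{h}^{\CL(p)}$, its critical locus $\widecheck{M}^{\CL}_{\mathrm{Dol}}$, and the resulting vanishing cycle identification are all built out of $\Gamma$-equivariant data, and since $\Gamma$ acts trivially on the base $\widehat{A}^{\CL(p)}$, no auxiliary choice is broken by the group action. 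Once $\Gamma$-equivariance is checked, étale descent along $\pi$ yields the desired isomorphism $\varphi_{\widehat{g}} \simeq \mathrm{IC}_{\widehat{M}^{\CL}_{\mathrm{Dol}}}$.
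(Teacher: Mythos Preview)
Your proposal is correct and follows essentially the same idea as the paper, just phrased dually: the paper computes $\varphi_{\widehat{g}} \simeq (r_*\varphi_g)^{\Gamma}$ via pushforward-and-invariants along the quotient map, while you pull back along $\pi$ and then descend, which for a finite \'etale $\Gamma$-torsor is the inverse construction. Both arguments ultimately rest on the same point you single out as the principal obstacle --- that the identification $\varphi_g\simeq \mathrm{IC}_{\widecheck{M}^{\CL}_{\mathrm{Dol}}}$ of Theorem~\ref{thm2.5} is $\Gamma$-equivariant --- which the paper uses implicitly in its final step $(r_*\varphi_g)^\Gamma\simeq \BQ_{\widehat{M}^{\CL}_{\mathrm{Dol}}}[\dim]$; your explicit acknowledgment of this point and the justification you give (canonicality of the construction in \cite{MS}, $\Gamma$ acting trivially on the Hitchin base) are appropriate.
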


\begin{proof}
We reduce Proposition \ref{prop2.6} to Theorem \ref{thm2.5}. Consider the $\Gamma$-quotient map $r: \widecheck{M}^{\CL(p)}_{\mathrm{Dol}} \to \widehat{M}^{\CL(p)}_{\mathrm{Dol}}$. The direct image $r_* \BQ_{\widecheck{M}^{\CL(p)}_{\mathrm{Dol}}}$ admits a natural $\Gamma$-equivariant structure, whose invariant part recovers 
\begin{equation}\label{2.6_1}
\left(r_* \BQ_{\widecheck{M}^{\CL(p)}_{\mathrm{Dol}}}\right)^{\Gamma} \simeq \BQ_{\widehat{M}^{\CL(p)}_{\mathrm{Dol}}}.
\end{equation}
Since $g = \widehat{g}\circ r$, we have
\begin{align*}
\varphi_{\widehat{g}} & = \varphi_{\widehat{g}}\left(\BQ_{\widehat{M}^{\CL(p)}_{\mathrm{Dol}}}\left[\mathrm{dim} \widehat{M}^{\CL(p)}_{\mathrm{Dol}}\right]\right) \\ & \simeq \varphi_{\widehat{g}}\left(r_* \BQ_{\widecheck{M}^{\CL(p)}_{\mathrm{Dol}}}\left[\mathrm{dim} \widecheck{M}^{\CL(p)}_{\mathrm{Dol}}\right]\right)^{\Gamma}\\
& \simeq (r_* \varphi_g)^\Gamma \\ & \simeq  \BQ_{\widehat{M}^{\CL}_{\mathrm{Dol}}}\left[\mathrm{dim} \widehat{M}^{\CL}_{\mathrm{Dol}}\right].
\end{align*}
Here the first equation follows by definition, the second uses (\ref{2.6_1}), the third follows from the base change, and the last is given by Theorem \ref{thm2.5}.
\end{proof}

%In the remaining, we show that the following conjecture implies Conjecture \ref{conj2.3}; therefore it also implies Conjecture \ref{conj} in view of Proposition \ref{prop2.41}.

Now we formulate a sheaf-theoretic enhancement of Conjecture \ref{conj2.2}.

\begin{thm}\label{conj2.7}
There exists an effective line bundle $\CL$ such that the class
\[
\mathrm{ch}_k(\CU^\CL) \in H^{2k}(C\times \widehat{M}^\CL_{\mathrm{Dol}}, \BQ)
\]
has strong perversity $k$ with respect to 
\[
\mathbf{h}^\CL: = \mathrm{id} \times \widehat{h}^\CL:C\times \widehat{M}^\CL_{\mathrm{Dol}} \to C\times \widehat{A}^\CL. 
\]
Here $\CU^\CL$ is the universal $\mathrm{PGL}_n$-bundle over $C\times \widehat{M}^\CL_{\mathrm{Dol}}$.
\end{thm}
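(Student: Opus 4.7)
The plan is to fix an effective $\CL$ of sufficiently high degree, realize cup product with $\ch_k(\CU^\CL)$ via a cohomological correspondence on a parabolic $\CL$-twisted Hitchin moduli space, establish the strong perversity bound over the elliptic locus using Yun's global Springer theory, and finally propagate the bound to the entire Hitchin base via a support theorem for the parabolic Hitchin fibration.

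Concretely, I would pick $\CL$ with $\deg(\CL) \gg 0$, so that $\widehat{M}^\CL_{\mathrm{Dol}}$ is smooth and the $\delta$-regularity estimates of Ng\^o \cite{Ngo} apply on $\widehat{A}^\CL$. Following Yun \cite{Yun1,Yun2,Yun3}, I would introduce the parabolic Hitchin moduli $\widehat{M}^{\CL,\mathrm{par}}_{\mathrm{Dol}}$ parametrizing stable $\CL$-twisted Higgs bundles together with a compatible Borel reduction at a varying marked point $p\in C$, sitting in a commutative diagram
\[
\begin{tikzcd}
\widehat{M}^{\CL,\mathrm{par}}_{\mathrm{Dol}} \arrow[r,"\pi"] \arrow[dr,"\widehat{h}^{\CL,\mathrm{par}}"'] & C\times \widehat{M}^\CL_{\mathrm{Dol}} \arrow[d,"\mathbf{h}^\CL"] \\
& C\times \widehat{A}^\CL
\end{tikzcd}
\]
with $\pi$ proper. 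Global Springer theory produces an affine Weyl group action on the pushforward $R\widehat{h}^{\CL,\mathrm{par}}_*\BQ$, and I would realize the classes $\ch_k(\CU^\CL)$ on $C\times \widehat{M}^\CL_{\mathrm{Dol}}$ as arising, via $\pi$, from the polynomial part of this action (the symmetric polynomials in the tautological characters of the torus); the perverse degrees of these polynomial operators are controlled in terms of their cohomological degrees by Yun's theory.

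Over the elliptic locus $\widehat{A}^{\CL,\mathrm{ell}} \subset \widehat{A}^\CL$, where spectral curves are integral, the pushforward $R\widehat{h}^{\CL,\mathrm{par}}_*\BQ$ decomposes explicitly in terms of shifted IC sheaves on Prym-type abelian varieties. In this setting I would verify directly that cup-product with each $\ch_k(\CU^\CL)$ shifts the perverse filtration by at most $k$, using the explicit action of Yun's operators on the cohomology of Prym varieties; this gives strong perversity of $\ch_k(\CU^\CL)$ after restriction to $C\times \widehat{A}^{\CL,\mathrm{ell}}$, and is the content of Step 3.

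The main obstacle is Step 4: extending strong perversity from the elliptic locus to all of $C\times \widehat{A}^\CL$. Following Chaudouard--Laumon \cite{CL} and \cite{MS}, I would prove a \emph{parabolic support theorem}: every simple summand appearing in the decomposition of $R\widehat{h}^{\CL,\mathrm{par}}_*\BQ$ has full support $C\times \widehat{A}^\CL$. Granting this, every perverse direct summand of $R\mathbf{h}^\CL_*\BQ$ is the intermediate extension of its restriction to the open locus $C\times\widehat{A}^{\CL,\mathrm{ell}}$, so the strong perversity bound $k$ for $\ch_k(\CU^\CL)$ established there propagates automatically to the full base. Proving this parabolic support theorem is the technical heart of the argument; it combines the $\delta$-regularity inequality, the hypothesis $\deg(\CL)\gg 0$, and a careful analysis of the parabolic directions using Yun's affine Weyl symmetries, in close parallel with the support theorem of \cite{CL}. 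Once the parabolic support theorem is in place, strong perversity $k$ of each $\ch_k(\CU^\CL)$ on the full base $C\times \widehat{A}^\CL$ follows, and Theorem \ref{conj2.7} is proved.
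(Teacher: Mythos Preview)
Your plan is essentially the paper's: parabolic $\CL$-twisted Hitchin moduli, Yun's global Springer theory to realize $\ch_k(\pi^*\CU^\CL)$ as a polynomial in the tautological classes $c_1(L(\xi))$, Yun's strong perversity $1$ for each $c_1(L(\xi))$ over a dense open, and a parabolic support theorem to propagate. Two points deserve sharpening. First, the extension and descent are cleaner if kept entirely on the parabolic side: the support theorem is for $R\widehat{h}^{\CL,\mathrm{par}}_*\BQ$, so strong perversity $1$ of each $c_1(L(\xi))$ extends over the whole base there; multiplying gives strong perversity $k$ for $\ch_k(\pi^*\CU^\CL)$; then one descends to $\ch_k(\CU^\CL)$ by taking $W$-invariants of the finite Springer action on $R\pi_*\BQ$, whose invariant part is $\BQ_{C\times\widehat{M}^\CL}$. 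Your sentence ``every perverse direct summand of $R\mathbf{h}^\CL_*\BQ$ is the intermediate extension \ldots'' conflates the two pushforwards. Second, the parabolic support theorem in the paper does not use affine Weyl symmetries: it follows the Ng\^o--Chaudouard--Laumon template (weak abelian fibration axioms for the Prym action, Ng\^o's support inequality, $\delta$-regularity), and the only genuinely new ingredient is a relative dimension bound for the central fibre of $\widecheck{h}^p$, obtained by dominating it by the nilpotent cone of the \emph{strongly} parabolic Hitchin system associated with a reduced divisor $D\ni x$ representing $\CL$.
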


Notice that the perversity bound here is stronger than the trivial one of $2k$.  We prove in the rest of this section that Theorem \ref{conj2.7} indeed implies Conjecture \ref{conj2.2}; therefore it implies Conjecture \ref{conj}. We prove Theorem \ref{conj2.7} in Sections 3 and 4.

\subsection{Theorem \ref{conj2.7} implies Conjecture \ref{conj2.2}}\label{Sec2.3} We start with the following claim.

%\begin{prop}
%Conjecture \ref{conj2.7} implies Conjecture \ref{conj2.3}.
%\end{prop}

%\begin{proof}
\medskip
\noindent {\bf Claim.} For a point $p\in C$, if the class
\[
\mathrm{ch}_k(\CU^{\CL(p)}) \in H^{2k}(C\times \widehat{M}^{\CL(p)}_{\mathrm{Dol}}, \BQ) 
\]
has strong perversity $k$ with respect to $\mathbf{h}^{\CL(p)}: C\times \widehat{M}^{\CL(p)}_{\mathrm{Dol}} \to C\times \widehat{A}^{\CL(p)}$, then the class
\[
\mathrm{ch}_k(\CU^{\CL}) \in H^{2k}(C\times \widehat{M}^{\CL}_{\mathrm{Dol}}, \BQ)
\]
has strong perversity $k$ with respect to $\mathbf{h}^{\CL}: C\times \widehat{M}^\CL_{\mathrm{Dol}} \to C\times \widehat{A}^\CL$.

\begin{proof}[Proof of Claim]

We consider the commutative diagram obtained from (\ref{diagram3}):

\begin{equation}\label{diagram4}
\begin{tikzcd}
C \times \widehat{M}^\CL_{\mathrm{Dol}}\arrow[r,hook, "\mathbf{i}"] \arrow[d, "\mathbf{h}^\CL"]
&C\times \widehat{M}^{\CL(p)}_{\mathrm{Dol}} \arrow[d, "\mathbf{h}^{\CL(p)}"] \\
C\times \widehat{A}^\CL \arrow[r,hook]
& C\times \widehat{A}^{\CL(p)}.
\end{tikzcd}
\end{equation}
Here $\mathbf{i} = \mathrm{id} \times \widehat{i}$. By definition,  we have $\mathbf{i}^* \CU^{\CL(p)} = \CU^{\CL}$. Therefore 
\[
\mathbf{i}^* \mathrm{ch}_k(\CU^{\CL(p)}) = \mathrm{ch}_k(\CU^\CL).
\]
We know from Proposition \ref{prop2.6} that the vanishing cycle complex associated with the regular function
\[
\mathbf{g}: =  \widehat{g} \circ \mathrm{pr}_M: C\times \widehat{M}^{\CL(p)}_{\mathrm{Dol}} \to\widehat{M}^{\CL(p)}_{\mathrm{Dol}}\to \BA^1
\]
satisfies
\[
\varphi_{\mathbf{g}} \simeq \mathrm{IC}_{C\times \widehat{M}^{\CL}_{\mathrm{Dol}}} = {\BQ}_{C\times \widehat{M}^{\CL}_{\mathrm{Dol}}}\left[\mathrm{dim}~ (C\times \widehat{M}^{\CL}_{\mathrm{Dol}})\right].
\]
Our claim then follows from Proposition \ref{prop1.4} (applied to the diagram (\ref{diagram4}), the class $\mathrm{ch}_k(\CU^{\CL(p)})$, and the map $\mathbf{g}$).
\end{proof}

%We prove Conjecture \ref{conj2.7} in the next two sections. Note that although for our purpose, we only need \emph{one} effective divisor $D$ satisfying Conjecture \ref{conj2.7}, our proof actually shows that it is true for \emph{any} effective $\CL$.

%\subsection{Enhancement I: strong perversity}

As a consequence of the claim, the statement of Theorem \ref{conj2.7} holds for $\CL \simeq \CO_C$, \emph{i.e.}, the class 
\[
\mathrm{ch}_k(\CU) \in H^{2k}(C\times \widehat{M}_{\mathrm{Dol}}, \BQ)
\] 
has strong perversity $k$ with respect to 
\[
\mathbf{h}:= \mathrm{id} \times \widehat{h}: C\times \widehat{M}_{\mathrm{Dol}} \to C\times \widehat{A}.
\]

%We first lift Conjecture \ref{conj2.2} to a sheaf-theoretic statement concerning the decomposition theorem for $\widehat{h}: \widehat{M}_{\mathrm{Dol}} \to \widehat{A}$.

Finally, to deduce the cohomological statement, we note that since $\mathbf{h}$ is the identity map restricting to $C$, its induced perverse filtration can be described as:
\begin{equation}\label{2.2_1}
P_kH^*(C \times \widehat{M}_{\mathrm{Dol}}, \BQ) = H^*(C, \BQ) \otimes P_kH^*(\widehat{M}_{\mathrm{Dol}}, \BQ).
\end{equation}
Choose a homogeneous basis of $H^*(C, \BQ)$:
\[
\Sigma=\{\sigma_0, \sigma_1, \dots, \sigma_{2g+2}\}
\]
with Poincar\'e-dual basis $\{\sigma^{\vee}_{0}, \dots, \sigma^\vee_{2g+2}\}$.

We may express $\mathrm{ch}_k(\CU)$ as
\begin{equation}\label{2.4_1}
\mathrm{ch}_k(\CU) = \sum_{\sigma\in \Sigma} \sigma^\vee \otimes c_k(\sigma) \in H^*(C, \BQ) \otimes H^*(\widehat{M}_{\mathrm{Dol}}, \BQ).
\end{equation}
Since $\mathrm{ch}_k(\CU)$ has strong perversity $k$, Lemma \ref{lem1.1} implies that
\[
\mathrm{ch}_k(\CU) \cup - : P_sH^*(C\times \widehat{M}_{\mathrm{Dol}}, \BQ) \to P_{s+k}H^{*}(C\times \widehat{M}_{\mathrm{Dol}}, \BQ).
\]
Applying this operator to $\sigma \otimes P_sH^*(\widehat{M}_{\mathrm{Dol}}, \BQ) \subset H^*(C\times \widehat{M}_{\mathrm{Dol}}, \BQ)$ with $\sigma \in \Sigma$, we obtain from (\ref{2.2_1}) and (\ref{2.4_1}) that
\[
c_k(\sigma) \cup -: P_sH^*(\widehat{M}_{\mathrm{Dol}}, \BQ) \to P_{s+k}H^*(\widehat{M}_{\mathrm{Dol}}, \BQ). 
\]
In particular, for any class $\gamma \in H^*(C, \BQ)$ we have
\[
c_k(\gamma): P_sH^*(\widehat{M}_{\mathrm{Dol}}, \BQ) \to P_{s+k}H^*(\widehat{M}_{\mathrm{Dol}}, \BQ)\]
which further yields
\[
\prod_ic_{k_1}(\gamma_i) = \left( \prod_ic_{k_1}(\gamma_i) \right) \cup 1 \in P_{\Sigma_ik_i}H^*(\widehat{M}_{\mathrm{Dol}}, \BQ).
 \]
 Here we used $1\in P_0H^0(\widehat{M}_{\mathrm{Dol}}, \BQ)$ in the last equation, which is given by Lemma \ref{lem0}.

This completes the proof that Theorem \ref{conj2.7} implies Conjecture \ref{conj2.2}. \qed

\section{Global Springer theory}

In this section, we review Yun's global Springer theory \cite{Yun1, Yun2, Yun3} and use it to deduce Theorem \ref{conj2.7}, assuming a support theorem (Theorem \ref{thm3.2}). Global Springer theory was previously used to study perverse filtrations for affine Springer fibers in terms of Chern classes \cite{OY, OY2}.  In our setting, we require a partial extension of Yun's results from the elliptic locus to the entire Hitchin base.

\subsection{Notations}
We fix $\CL$ to be an effective line bundle of sufficiently large degree
($\mathrm{deg}\left(\CL\right)> 2g$ is enough for us).
 Since from now on we only concern the $\CL$-twisted moduli spaces, we will use $M, \widecheck{M}$, and $\widehat{M}$ to denote the $\CL$-twisted $\mathrm{GL}_n$, $\mathrm{SL}_n$, and $\mathrm{PGL}_n$ Dolbeault moduli spaces $M^\CL_{\mathrm{Dol}}, \widecheck{M}^\CL_{\mathrm{Dol}}$, and $\widehat{M}^\CL_{\mathrm{Dol}}$ respectively. For the same reason, we will uniformly use the term \emph{Higgs bundles} to call $\CL$-twisted Higgs bundles.

From now on we let $G = \mathrm{PGL}_n$, $\mathfrak{g}$ its Lie algebra, $B \subset G$ the Borel subgroup induced by upper triangular matrices with $\mathfrak{b}$ the Lie algebra, $T\subset B$ the maximal torus given by diagonal matrices, and $W \simeq \mathfrak{S}_n$ the Weyl group. We denote by $\BX^*(T)$ the character group of $T$; it is isomorphic to $\BZ^{n-1}$ as an abelian group.

\subsection{Parabolic moduli stacks}

Let $\widehat{\FM}$ be the moduli stack of $G$-Higgs bundles on $C$; we do not impose any stability condition on $\widehat{\FM}$ so that it is only a (singular) algebraic stack. The stable locus of $\widehat{\FM}$ is a nonsingular Deligne--Mumford substack
\[
\widehat{M} \hookrightarrow \widehat{\FM}.
\]

Yun's global Springer theory \cite{Yun1} constructs an algebraic stack $\widehat{\FM}^\mathrm{par}$ over $C \times \widehat{\FM}$:
\begin{equation}\label{PI}
\pi: \widehat{\FM}^\mathrm{par} \to C \times \widehat{\FM}
\end{equation}
which is a global analog of the Grothendieck simultaneous resolution. There are two equivalent constructions of $\widehat{\FM}^{\mathrm{par}}$ given in \cite[Section 2.1]{Yun1}.

The first is to construct $\widehat{\FM}^{\mathrm{par}}$ as the moduli stack of parabolic Higgs bundles, which are quadruples $(x, \CE, \theta, \CE_x^B)$, with $(\CE, \theta)$ is a $G$-Higgs bundle, $x\in C$ a closed point, and $\CE_x^B$ a $B$-reduction of $\CE$ at $x$, satisfying the constraint that $\theta$ is compatible with $\CE_x^B$; see \cite[Definition 2.1.1]{Yun1}. Then the morphism $\pi$ is given by forgetting the $B$-reduction:
\[
\pi (x, \CE, \theta, \CE_x^B)  = (x, (\CE,\theta)) \in C \times \widehat{\FM}.
\]

The second construction is via the Grothendieck simultaneous resolution $\pi_G: [\mathfrak{b}/B] \to [\mathfrak{g}/G]$. More precisely, let $\rho_\CL$ be the $\BG_m$-torsor over $C$ associated with the line bundle $\Omega_\CL$. Denote by $[\mathfrak{g}/G]_\CL$ (resp. $[\mathfrak{b}/B]_\CL$) the family of $[\mathfrak{g}/G]$ (resp. $[\mathfrak{b}/B]$) over the curve $C$ twisted by the torsor $\rho_\CL$. We have a tautological evaluation map
\begin{equation}\label{evaluation}
    \begin{tikzcd}[column sep=small]
    C\times \widehat{\FM} \arrow[dr, ""] \arrow[rr, "\mathrm{ev}"] & & {[\mathfrak{g}/G]_\CL} \arrow[dl, ""] \\
       & C  & 
\end{tikzcd}
\end{equation}
which is a natural $C$-morphism; after base change to a closed point $x\in C$, the map $\mathrm{ev}$ sends a $G$-Higgs bundle to the evaluation of its Higgs field at $x$. The morphism (\ref{PI}) is then induced by the base change of the Grothendieck simultaneous resolution along the evaluation map (see \cite[Lemma 2.1.2]{Yun1}):
\begin{equation*}
\begin{tikzcd}
\widehat{\FM}^{\mathrm{par}} \arrow[r, "\mathrm{ev}^p"] \arrow[d, "\pi"]
&{[\mathfrak{b}/B]_\CL} \arrow[d, "\pi_G"] \\
C\times \widehat{\FM} \arrow[r, "\mathrm{ev}"]
& {[\mathfrak{g}/G]_\CL}.
\end{tikzcd}
\end{equation*}

The parabolic Hitchin system
\[
\widehat{h}^\mathrm{par}: \widehat{\FM}^{\mathrm{par}} \to C\times \widehat{A}
\]
is the composition of $\pi:\widehat{\FM}^{\mathrm{par}} \to C\times \widehat{\FM}$ and the morphism $\mathbf{h}: C\times \widehat{\FM} \to C\times \widehat{A}$
induced by the standard (stacky) Hitchin map $\widehat{h}: \widehat{\FM} \to \widehat{A}$.\footnote{Recall that we always use the $\CL$-twisted version.}

In general the moduli stack $\widehat{\FM}^{\mathrm{par}}$ is singular, and the parabolic Hitchin map is not proper. In the next section we will impose a stability condition and then restrict to the stable locus. 

\subsection{Stable loci}

We define the \emph{stable locus} of the moduli of parabolic $G$-Higgs bundles to be
\[
\widehat{M}^{\mathrm{par}}: = (C \times \widehat{M}) \times_{C \times \widehat{\FM}} \widehat{\FM}^{{\mathrm{par}}};
\]
equivalently, it fits into the Cartesian diagrams
\begin{equation}\label{diag0}
\begin{tikzcd}
\widehat{M}^{\mathrm{par}} \arrow[r,hook] \arrow[d, "\pi"]&  \widehat{\FM}^{\mathrm{par}} \arrow[r, "\mathrm{ev}^p"] \arrow[d, "\pi"]
&{[\mathfrak{b}/B]_\CL} \arrow[d, "\pi_G"] \\ C \times \widehat{M} \arrow[r,hook]
& C\times \widehat{\FM} \arrow[r, "\mathrm{ev}"]
& {[\mathfrak{g}/G]_\CL}.
\end{tikzcd}
\end{equation}

We use the same notation as for the stacky case to denote the parabolic Hitchin map restricted to the stable locus:
\begin{equation}\label{parah}
\widehat{h}^{\mathrm{par}}: \widehat{M}^{\mathrm{par}} \to C\times \widehat{A}.
\end{equation}

\begin{prop}\label{prop3.1}
\begin{enumerate}
    \item[(i)] The moduli stack $\widehat{M}^{\mathrm{par}}$ is nonsingular and Deligne--Mumford; and
    \item[(ii)] the parabolic Hitchin map (\ref{parah}) is proper.
\end{enumerate}
\end{prop}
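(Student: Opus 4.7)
The plan is to extract both statements directly from the two Cartesian squares displayed in (\ref{diag0}). For part (i), nonsingularity of $\widehat{M}^{\mathrm{par}}$, I would use the outer Cartesian square
\[
\widehat{M}^{\mathrm{par}} = (C \times \widehat{M}) \times_{[\mathfrak{g}/G]_\CL} [\mathfrak{b}/B]_\CL
\]
and reduce the question to smoothness of the evaluation map $\mathrm{ev}$ restricted to the stable locus $C \times \widehat{M}$. Note that $[\mathfrak{b}/B]_\CL$ is a smooth algebraic stack, being the twist along the $\BG_m$-torsor $\rho_\CL$ of the quotient of the smooth variety $\mathfrak{b}$ by the smooth group $B$. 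Granted smoothness of $\mathrm{ev}$, base change will give that the pulled-back map $\mathrm{ev}^p: \widehat{M}^{\mathrm{par}} \to [\mathfrak{b}/B]_\CL$ is smooth, and composing with smoothness of the target yields nonsingularity of $\widehat{M}^{\mathrm{par}}$.

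The smoothness of $\mathrm{ev}: C\times \widehat{M} \to [\mathfrak{g}/G]_\CL$ is the main technical step and the principal obstacle. I would verify it pointwise via deformation theory. At a point $(x, \CE, \theta)$, infinitesimal deformations of the underlying triple are controlled by the hypercohomology of the two-term complex $\mathrm{ad}(\CE) \xrightarrow{[\theta,-]} \mathrm{ad}(\CE) \otimes \Omega_\CL$, together with the $C$-direction at $x$. The obstruction space $\BH^2$ vanishes by Serre duality once $\mathrm{deg}\,\CL$ is sufficiently large, which is precisely where the assumption $\mathrm{deg}\,\CL > 2g$ is used. Surjectivity of the differential of $\mathrm{ev}$ at $(x,\CE,\theta)$ onto $T_{\theta(x)}[\mathfrak{g}/G]_\CL$ then reduces to checking that global sections of $\mathrm{ad}(\CE)\otimes \Omega_\CL$ surject onto the fiber at $x$, which again follows from the positivity of $\Omega_\CL$. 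The Deligne--Mumford property is then immediate: the automorphism group of any parabolic Higgs bundle $(x,\CE,\theta,\CE_x^B)$ injects into $\mathrm{Aut}(\CE,\theta)$ via the forgetful map $\pi$, and the latter is finite because $\widehat{M}$ itself is already Deligne--Mumford.

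For part (ii), I would factor $\widehat{h}^p = \mathbf{h} \circ \pi$. The map $\mathbf{h} = \mathrm{id}_C \times \widehat{h}$ is proper since the $\CL$-twisted Hitchin map $\widehat{h}: \widehat{M} \to \widehat{A}$ is classically known to be proper (due to Nitsure in the algebraic setting). By the left Cartesian diagram in (\ref{diag0}), $\pi: \widehat{M}^{\mathrm{par}} \to C\times \widehat{M}$ is the base change along $\mathrm{ev}$ of the Grothendieck simultaneous resolution $\pi_G: [\mathfrak{b}/B]_\CL \to [\mathfrak{g}/G]_\CL$. The morphism $\pi_G$ is proper, since its geometric fibers are Springer fibers, all sitting inside the flag variety $G/B$. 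Properness is preserved under base change, so $\pi$ is proper, and $\widehat{h}^p$ is proper as a composition of proper morphisms.
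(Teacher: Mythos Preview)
Your proposal is correct and follows essentially the same route as the paper: both extract (i) from smoothness of the evaluation map on the stable locus (the paper cites \cite[Proposition 4.1]{MS} for this deformation-theoretic input, which you sketch directly) together with smoothness of $[\mathfrak{b}/B]_\CL$, and both obtain (ii) by factoring $\widehat{h}^p$ through the base change $\pi$ of the proper map $\pi_G$. For the Deligne--Mumford claim the paper phrases the argument as ``$\pi$ is schematic since $\pi_G$ is, and $C\times\widehat{M}$ is Deligne--Mumford,'' which is equivalent to your observation that automorphisms of a parabolic object inject into automorphisms of the underlying stable Higgs bundle.
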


\begin{proof}
The Deligne--Mumford part of (i) follows from the proof of \cite[Proposition 2.5.1 (3)]{Yun1}. Indeed, the left vertical arrow in the diagram (\ref{diag0}) is the pullback of $\pi_G$, therefore it is schematic and of finite type. This implies that $\widehat{M}^{\mathrm{par}}$ is Deligne--Mumford since $C \times \widehat{M}$ is Deligne--Mumford.

To prove the smoothness part of (i), we use the evaluation map (\ref{evaluation}). Recall that \cite[Proposition 4.1]{MS} (which was proven via deformation theory) shows that the $C$-morphism $\mathrm{ev}$ is smooth after restricting over the stable locus $C \times \widehat{M}$. Hence by the Cartesian diagrams of (\ref{diag0}), the evaluation map
\[
\mathrm{ev}: \widehat{M}^{\mathrm{par}} \to {[\mathfrak{b}/B]_\CL}
\]
is also smooth. Since the target is a nonsingular algebraic stack, so is the source.

(ii) follows directly from (\ref{diag0}) and that $\widehat{h}: \widehat{M} \to \widehat{A}$ is proper.
\end{proof}

As a consequence of Proposition \ref{prop3.1}, the direct image complex
\[
R{\widehat{h}^{\mathrm{par}}}_* ~\BQ_{\widehat{M}^{\mathrm{par}}} \in D^b_c(C\times \widehat{A})
\]
satisfies the decomposition theorem \cite{BBD}.

\begin{thm}[Support theorem for parabolic Hitchin map]\label{thm3.2}
The decomposition for the parabolic Hitchin map $\widehat{h}^{\mathrm{par}}$ has full support, \emph{i.e.}, any non-trivial simple perverse summand of \[
^\mathfrak{p}\CH^i\left(R{\widehat{h}^{\mathrm{par}}}_* ~\BQ_{\widehat{M}^{\mathrm{par}}} \right), \quad  \forall i\in \BZ\]
has support $C\times \widehat{A}$.
\end{thm}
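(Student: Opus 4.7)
The plan is to adapt the support theorem of Chaudouard--Laumon \cite{CL} from the ordinary to the parabolic Hitchin map. Chaudouard--Laumon's theorem, built on Ng\^o's $\delta$-regularity inequality and the Prym abelian symmetry, establishes full support for $R\widehat{h}_* \BQ_{\widehat{M}}$ on $\widehat{A}$ when $\mathrm{deg}(\CL) > 2g$; our choice of $\CL$ is made precisely so that the analogous bound will apply in the parabolic setting. The parabolic map $\widehat{h}^p$ inherits the structural features used in that argument: smoothness of the source and properness of the map are given by Proposition \ref{prop3.1}; the Prym group scheme $P/(C \times \widehat{A})$ acts fibrewise on $\widehat{M}^{\mathrm{par}}$ via its action on the underlying Higgs bundles, the $B$-reduction datum at the marked point being preserved under tensor by torsion line bundles; and Yun's global Springer theory describes the fibers over the elliptic locus as $W$-torsors, so that $R\widehat{h}^p_* \BQ_{\widehat{M}^{\mathrm{par}}}$ restricts over the elliptic locus to an $\mathrm{IC}$-extension of Springer-type local systems, each of which clearly has full support after extension.

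The argument then proceeds by contradiction, following the Ng\^o--Chaudouard--Laumon template. Suppose some simple perverse summand of $^{\mathfrak p}\CH^i(R\widehat{h}^p_* \BQ_{\widehat{M}^{\mathrm{par}}})$ is supported on a proper closed irreducible subvariety $Z \subsetneq C \times \widehat{A}$. Ng\^o's support inequality, applied as in \cite{CL}, bounds the codimension of $Z$ by the maximum of the $\delta$-invariant of the associated spectral covers along $Z$; on the other hand, the Chaudouard--Laumon estimate, which uses the degree bound $\mathrm{deg}(\CL) > 2g$, provides the opposite strict inequality. Combining Yun's Springer description on the elliptic locus with this contradiction, every simple summand must have full support $C \times \widehat{A}$. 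The $\delta$-regularity machinery is essentially unchanged in passage from $\widehat{M}$ to $\widehat{M}^{\mathrm{par}}$: the Prym symmetry and the spectral geometry underlying the inequality depend only on the Higgs bundle data, and the extra $C$-factor of the base enters only through projection, which does not affect the codimension bound since the elliptic locus of $C \times \widehat{A}$ is open dense and surjects onto $C$.

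The main obstacle I anticipate is the rigorous transfer of the Chaudouard--Laumon support inequality to a family parametrized by the curve $C$. Concretely, one must check that: (i) the Prym action on $\widehat{M}^{\mathrm{par}}$ is strictly compatible with the forgetful map $\pi$ of (\ref{PI}), which follows from the canonical compatibility of tensoring by torsion line bundles with $B$-reductions; (ii) the stratification of $\widehat{A}$ by the $\delta$-invariant pulls back to a well-behaved stratification of $C \times \widehat{A}$ to which Ng\^o's codimension estimate applies stratum-by-stratum; and (iii) the polarization and Tate-module computations in \cite{CL} carry over once the parabolic Hitchin fibers are resolved via the Grothendieck--Springer pullback of the Cartesian diagram (\ref{diag0}). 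Once these verifications are in place, the full-support conclusion follows exactly as in the non-parabolic case.
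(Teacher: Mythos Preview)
Your outline correctly identifies the Chaudouard--Laumon template and the ingredients that transfer painlessly (the Prym action, Yun's full-support result over the elliptic locus, the $\delta$-regularity estimate pulled back along $C\times\widehat{A}\to\widehat{A}$). But you have missed the one step that is genuinely new in the parabolic setting and which the paper isolates as the heart of the argument: the \emph{relative dimension bound}
\[
\tau_{>2\mathbf{d}}\bigl(R\widecheck{h}^p_*\,\BQ_{\widecheck{M}^{\mathrm{par}}}\bigr)=0,\qquad \mathbf{d}=\dim\widecheck{M}^{\mathrm{par}}-\dim(C\times\widehat{A}).
\]
Ng\^o's support inequality for a weak abelian fibration does not come for free from properness and the group action; it requires as an input that the fibers of the map have dimension at most $\mathbf{d}$ (equivalently, the vanishing above). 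For the ordinary $\mathrm{SL}_n$ Hitchin map this is immediate, but for the parabolic map the fiber over $(x,0)$ parameterizes a nilpotent Higgs bundle \emph{together with} a compatible flag at $x$, and nothing in your three listed checks (compatibility of the Prym action, pullback of the $\delta$-stratification, Tate-module/polarization verifications) addresses why this extra flag datum cannot make the central fiber too large.

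The paper supplies this missing bound by an argument you do not mention: it writes $\CL=\CO_C(D)$ with $D$ reduced containing $x$, introduces the moduli of \emph{strongly} parabolic Higgs bundles for $(C,D)$, whose Hitchin map is Lagrangian and hence has nilpotent cone of the exactly computable dimension $\tfrac{n(n-1)}{2}\deg(\CL)+(n^2-1)(g-1)$, and then exhibits a surjection from that nilpotent cone onto the parabolic fiber $(\widecheck{h}^p)^{-1}(x,0)$ (forgetting the flags at the points of $D\setminus\{x\}$; surjectivity uses that over $A(D)$ the Higgs field is automatically nilpotent at every $x_i$, so a compatible flag exists). Without this comparison, your contradiction never gets started: you cannot invoke the support inequality, and hence cannot play it off against $\delta$-regularity. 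Your obstacle list should replace items (ii) and (iii) with this fiber-dimension estimate; once it is in hand, the rest of your sketch is essentially the paper's proof.
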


We will postpone the proof of Theorem \ref{thm3.2} to Section 4.

\subsection{Proof of Theorem \ref{conj2.7}}

We first prove Theorem \ref{conj2.7} assuming Theorem \ref{thm3.2}.

There are three ingredients of global Springer theory
\[
\pi: \widehat{M}^\mathrm{par} \to C \times \widehat{M}
\]
which are important in the proof of Theorem \ref{conj2.7}. We summarize them as follows.
\begin{enumerate}
    \item[(A)] \emph{(Splitting of the universal $G$-bundle)} As explained in the second bullet point of \cite[Construction 6.1.4]{Yun1}, each Chern root of $\pi^*\CU$ is of the form $c_1(L(\xi))$ where $\xi$ is an element in $\BX^*(T)$ and $L(\xi)$ is the tautological line bundle on $\widehat{M}^{\mathrm{par}}$ associated with $\xi$. 
    
    More precisely, $\pi^*\CU$ is the universal $G$-bundle on $\widehat{M}^{\mathrm{par}}$ whose fiber over $(x,\CE,\theta, \CE^B_x)$ is $\CE_x$; the $B$-reduction $\CE_x^B$ yields a $T$-torsor over $\widehat{M}^{\mathrm{par}}$ which induces for each $\xi \in \BX^*(T)$ a line bundle $L(\xi)$.

    \item[(B)] \emph{(Strong perversity for $c_1(L(\xi))$)} By \cite[Lemma 3.2.3]{Yun2}, there exists a Zariski dense open subset of $C\times \widehat{A}$ over which the operator
    \[
c_1(L(\xi)): R\widehat{h}^{\mathrm{par}}_*\BQ_{\widehat{M}^{\mathrm{par}}} \to R\widehat{h}^{\mathrm{par}}_*\BQ_{\widehat{M}^{\mathrm{par}}}[2]    \]
has strong perversity $1$ for any $\xi \in \BX^*(T)$. 
%In other words,
%it sends ${^\mathfrak{p}\tau_{\leq i}}\left( R\widehat{h}^p_*\BQ_{\widehat{M}^{\mathrm{par}}}\right)$ to ${^\mathfrak{p}\tau_{\leq i-1}}\left(R\widehat{h}^p_*\BQ_{\widehat{M}^{\mathrm{par}}}[2]\right)$
Since $c_1(L(\xi))$ automatically has strong perversity $2$, showing it has strong perversity $1$ is equivalent to showing that the induced morphism of perverse cohomology sheaves
\[
{}^{p}\CH^i\left( R\widehat{h}^{\mathrm{par}}_*\BQ_{\widehat{M}^{\mathrm{par}}}\right)\rightarrow
{}^{p}\CH^i\left(R\widehat{h}^{\mathrm{par}}_*\BQ_{\widehat{M}^{\mathrm{par}}}[2]\right)
\]
vanishes for each $i$.  Once we have Theorem \ref{thm3.2}, these sheaves have full support over the entire base, so this vanishing (and thus strong perversity $1$) extends over the total base $C \times \widehat{A}$ as well. 
In fact, \cite[Lemma 3.2.3]{Yun2} was proven in exactly this way using a support theorem \cite[Section 4.6.2]{Yun2} for the elliptic locus.

    \item[(C)] \emph{(Springer's Weyl group action)} By the Cartesian diagrams (\ref{diag0}), we may pullback Springer's sheaf-theoretic Weyl group action from the Grothendieck simultaneous resolution; in particular, the object $R\pi_* \BQ_{\widehat{M}^{\mathrm{par}}}$ admits a canonical $W$-action whose invariant part recovers the trivial local system
    \[
\left(R\pi_* \BQ_{\widehat{M}^{\mathrm{par}}} \right)^W = \BQ_{C\times \widehat{M}}.  \]
By taking global cohomology we have
\[
H^*(\widehat{M}^{\mathrm{par}}, \BQ) = H^*(C\times \widehat{M}, \BQ) \oplus \left(\mathrm{variant~~part~~of~~} W \right).
\]
\end{enumerate}

Now we prove Theorem \ref{conj2.7}. We first prove its parabolic version.

\medskip
\noindent {\bf Claim.} The class
\[
\mathrm{ch}_k\left(\pi^* \CU\right) \in H^{2k}(\widehat{M}^{\mathrm{par}}, \BQ)
\]
has strong perversity $k$ with respect to the parabolic Hitchin system (\ref{parah}).

\begin{proof}[Proof of Claim]
By (A), the Chern character $\mathrm{ch}_k(\pi^*\CU)$ can be expressed in terms of $c_1(L(\xi))$. Hence the claim follows from Lemma \ref{lem1.2} and the strong perversity of $c_1(L(\xi))$ given by (B).
\end{proof}

Next, we reduce Theorem \ref{conj2.7} to the claim above via the following lemma.

\begin{lem}
For a class $\gamma \in H^l(C \times \widehat{M}, \BQ)$, if $\pi^*\gamma \in H^l(\widehat{M}^{\mathrm{par}}, \BQ)$ has strong perversity $k$ with respect to $\widehat{h}^{\mathrm{par}}: \widehat{M}^{\mathrm{par}} \to C\times \widehat{A}$, then $\gamma$ has strong perversity $k$ with respect to ${\mathbf{h}} : C\times \widehat{M} \to C\times \widehat{A}$.
\end{lem}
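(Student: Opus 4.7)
The plan is to leverage ingredient (C) of global Springer theory: the decomposition $R\pi_* \BQ_{\widehat{M}^{\mathrm{par}}} = \BQ_{C\times\widehat{M}} \oplus V$, where $\BQ_{C\times\widehat{M}}$ is the $W$-invariant summand and $V$ is the variant part. Composing with $R\mathbf{h}_*$ and using $\widehat{h}^p = \mathbf{h}\circ \pi$, this yields a splitting
\[
R\widehat{h}^p_* \BQ_{\widehat{M}^{\mathrm{par}}} \;=\; R\mathbf{h}_*\BQ_{C\times\widehat{M}} \,\oplus\, R\mathbf{h}_*V
\]
in $D^b_c(C\times \widehat{A})$. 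Call the inclusion $\iota$ and the retraction $\rho$, so $\rho\circ\iota = \mathrm{id}$. Since direct summands are preserved by the perverse truncation functor ${}^{\mathbf{p}}\tau_{\leq i}$, the maps $\iota$ and $\rho$ commute with ${}^{\mathbf{p}}\tau_{\leq i}$.

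The next step is to trace the morphism $\gamma$ through this splitting. By naturality of the unit of the adjunction $(\pi^*, R\pi_*)$, the pushforward $R\pi_* (\pi^*\gamma) : R\pi_*\BQ_{\widehat{M}^{\mathrm{par}}} \to R\pi_*\BQ_{\widehat{M}^{\mathrm{par}}}[l]$ restricts on the $W$-invariant summand to $\gamma : \BQ_{C\times\widehat{M}} \to \BQ_{C\times\widehat{M}}[l]$. Applying $R\mathbf{h}_*$, we obtain the factorization
\[
R\mathbf{h}_*\gamma \;=\; (R\mathbf{h}_*\rho)\circ R\widehat{h}^p_*(\pi^*\gamma)\circ (R\mathbf{h}_*\iota).
\]

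The remaining step is a straight diagram chase. Given $i$, suppose $\alpha \in {}^{\mathbf{p}}\tau_{\leq i}R\mathbf{h}_*\BQ_{C\times\widehat{M}}$. Since $R\mathbf{h}_*\iota$ commutes with ${}^{\mathbf{p}}\tau_{\leq i}$, its image lies in ${}^{\mathbf{p}}\tau_{\leq i}R\widehat{h}^p_*\BQ_{\widehat{M}^{\mathrm{par}}}$. The hypothesis that $\pi^*\gamma$ has strong perversity $k$ with respect to $\widehat{h}^p$ places the image of $R\widehat{h}^p_*(\pi^*\gamma)$ inside ${}^{\mathbf{p}}\tau_{\leq i+(k-l)}(R\widehat{h}^p_*\BQ_{\widehat{M}^{\mathrm{par}}}[l])$. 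Finally, $R\mathbf{h}_*\rho$ again commutes with ${}^{\mathbf{p}}\tau_{\leq i+(k-l)}$ and lands in ${}^{\mathbf{p}}\tau_{\leq i+(k-l)}(R\mathbf{h}_*\BQ_{C\times\widehat{M}}[l])$. Composing, $R\mathbf{h}_*\gamma$ takes ${}^{\mathbf{p}}\tau_{\leq i}$ into ${}^{\mathbf{p}}\tau_{\leq i+(k-l)}$, which is strong perversity $k$ for $\gamma$ with respect to $\mathbf{h}$.

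There is no serious obstacle here: the one subtlety to be careful about is the compatibility of the adjunction map $\gamma \to R\pi_*\pi^*\gamma$ with the Springer $W$-action, i.e.\ that the $W$-invariant piece of $R\pi_*\pi^*\gamma$ really is $\gamma$ itself. This follows from functoriality of the Springer $W$-action constructed by pullback from $[\mathfrak{b}/B]_\CL \to [\mathfrak{g}/G]_\CL$ in (\ref{diag0}), combined with the fact that $\pi^*\gamma$ is $W$-equivariant (it is pulled back from the base), so the splitting of the identity map on $R\pi_*\BQ_{\widehat{M}^{\mathrm{par}}}$ is preserved under composition with $R\pi_*(\pi^*\gamma)$.
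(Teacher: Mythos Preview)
Your proposal is correct and follows essentially the same approach as the paper: both use ingredient (C) to split off $R\mathbf{h}_*\BQ_{C\times\widehat{M}}$ as the $W$-invariant direct summand of $R\widehat{h}^p_*\BQ_{\widehat{M}^{\mathrm{par}}}$, identify the action of $\gamma$ on this summand with the restriction of the action of $\pi^*\gamma$, and then read off strong perversity from the compatibility of the splitting with perverse truncation. The paper phrases the recovery of $\gamma$ via the projection formula $\pi_*\pi^*\gamma = |W|\cdot\gamma$ while you phrase it via naturality of the adjunction unit, but these are the same mechanism.
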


\begin{proof}
    This is a consequence of (C). We write the class $\gamma$ as a map 
    \begin{equation}\label{3.3_1}
\gamma: \BQ_{C\times \widehat{M}} \to \BQ_{C\times \widehat{M}}[l],   
\end{equation}
whose pullback 
\begin{equation}\label{3.3_2}
\pi^*\gamma: \BQ_{\widehat{M}^{\mathrm{par}}} \to  \BQ_{\widehat{M}^{\mathrm{par}}}[l]
\end{equation}
recovers the class $\pi^*\gamma \in H^l(\widehat{M}^{\mathrm{par}}, \BQ)$. By the projection formula $\pi_*\pi^*\gamma = |W|\cdot \gamma$, we may recover the morphism (\ref{3.3_1}) from (\ref{3.3_2}) by derived pushing forward to $C \times \widehat{M}$ and taking the $W$-invariant part.

Now by the assumption we know that the action of $\pi^*\gamma$ on the object $R\widehat{h}^{\mathrm{par}}_*\BQ_{\widehat{M}^{\mathrm{par}}}$ satisfies
\begin{equation}\label{3.3_4}
\pi^*\gamma: {^\mathfrak{p}\tau_{\leq i}}R\widehat{h}^{\mathrm{par}}_*\BQ_{\widehat{M}^{\mathrm{par}}} \to {^\mathfrak{p}\tau_{\leq i+(k-l)}}\left(R\widehat{h}^{\mathrm{par}}_*\BQ_{\widehat{M}^{\mathrm{par}}}[l]\right).
\end{equation}
Since we have
\[
R\widehat{h}^{\mathrm{par}}_*\BQ_{\widehat{M}^{\mathrm{par}}} = R\mathbf{h}_*\left(R\pi_* \BQ_{\widehat{M}^{\mathrm{par}}}\right),
\] 
the ingredient (C) produces a natural $W$-action on this object whose invariant part recovers $R\mathbf{h}_*\BQ_{C\times \widehat{M}}$; the operator
\begin{equation}\label{3.3_3}
\gamma:  R\mathbf{h}_*\BQ_{C\times \widehat{M}} \to R\mathbf{h}_*\BQ_{C\times \widehat{M}}[l]  
\end{equation}
is then recovered from the $W$-invariant part of the action of $\pi^*\gamma$ on $R\widehat{h}^{\mathrm{par}}_*\BQ_{\widehat{M}^{\mathrm{par}}}$. In particular, the desired property concerning (\ref{3.3_3}) follows from the $W$-invariant part of (\ref{3.3_4}).
\end{proof}

Thus we have completed the proof of Theorem \ref{conj2.7}. \qed

\section{Parabolic support theorem}

In the last section, we prove Theorem \ref{thm3.2}. This is the parabolic version of the Chaudouard--Laumon support theorem \cite{CL}. For the proof, we ultimately reduce it to a relative dimension bound which we establish in Section \ref{sec4.4}.

\subsection{Review of support theorems}\label{Sec4.1}

We start with a review of support theorems for Hitchin systems.

For a proper morphism $f: X\to Y$ with $X,Y$ nonsingular Deligne--Mumford stacks, the decomposition theorem \cite{BBD} yields
\[
Rf_* \BQ_X \simeq \bigoplus_i {^\mathfrak{p}\CH^i(Rf_* \BQ_X)[-i]}
\]
with ${^\mathfrak{p}\CH^i}(Rf_* \BQ_X)$ semisimple perverse sheaves on $Y$; we say that a closed $Z \subset Y$ is a support of $f$ if it is a support of a simple summand of some ${^\mathfrak{p}\CH^i}(Rf_* \BQ_X)$. A particularly interesting case is that $f$ has full support, that is, $Y$ is the only support of $f$; in this case the cohomology of any closed fiber of $f$ is governed by the nonsingular fibers.

The study of supports for Hitchin systems was initiated by B.C. Ng\^o and is crucial in his proof of the fundamental lemma of the Langlands program \cite{Ngo}. He determines all the supports for the Hitchin system (including the $\CL$-twisted cases) after restricting to the elliptic locus of the Hitchin base; that is the subset formed by integral spectral curves.

After Ng\^o's work, Chaudouard--Laumon \cite{CL} observed that, if we consider the moduli space of $\CL$-twisted $\mathrm{GL}_n$ stable Higgs bundles with $\mathrm{deg}(\CL) >0$, then Ng\^o's support theorem can be extended to the total Hitchin base; in particular they showed that the $\CL$-twisted $\mathrm{GL}_n$ Hitchin system has full support. Chaudouard--Laumon's idea was extended to the $\mathrm{SL}_n$ case \cite{dC_SL}, the endoscopic moduli spaces \cite{MS}, and singular cases involving strictly semistable Higgs bundles \cite{MS2, MS3}. See also \cite{dCHM2, MM} concerning the supports over the open subset of reduced spectral curves for the untwisted (\emph{i.e.} $\CL = 0$) Hitchin system.

The idea of Chaudouard--Laumon \cite{CL} is to show that each support of the $\CL$-twisted Hitchin system has generic point lying in the elliptic locus; this is achieved by combining two constraints: (I) the support inequality for a weak abelian fibration, and (II) $\delta$-regularity for spectral curves. We describe them in more detail.
\begin{enumerate}
    \item[(I)] \emph{(Support inequaltiy)} The Hitchin system admits the structure of a weak abelian fibration, that is, there is a commutative group scheme $P$ over the Hitchin base acting on the moduli space which satisfies certain properties. Then an argument generalizing the Goresky--MacPherson inequality leads to a codimension estimate for the supports. More concretely, it says that any support $Z$ has codimension bounded above by the $\delta$-function of the group $P$. This part was already carried out in Ng\^o \cite[Section 7]{Ngo}; see \cite[Section 1]{MS2} for a summary.
    \item[(II)] \emph{($\delta$-regularity)} In the case of type $A$ and for the stable locus, the group scheme is obtained from the multi-degree 0 relative Picard (or, for $\mathrm{SL}_n$, Prym) variety associated with the family of spectral curves. Then we have the Severi inequality, referred to as \emph{$\delta$-regularity}, for the spectral curves. We refer to \cite[Theorem 5.4.4]{dC_SL} for the precise statement.
\end{enumerate}

Using (I) and (II), one can deduce that no support is allowed to have generic point lying outside the open subset of integral curves; this was explained in \cite[Section 11]{CL} for $\mathrm{GL}_n$, in \cite[Section 6.2]{dC_SL} for $\mathrm{SL}_n$, and in \cite[Section 4.5]{MS2} for a general complete linear system in a del Pezzo surface. The argument is to combine the two inequalities above to deduce a numerical contradiction if there is a support that appears outside the elliptic locus.

%We note that, for the Chaudouard--Laumon argument to work, we have to work with semistable $\CL$-twisted Higgs bundles with $\mathrm{deg}(\CL) > 0$.

\subsection{Parabolic Hitchin systems}
Now we focus on the $\CL$-twisted parabolic Hitchin system (\ref{parah}). Let $\widehat{A}^{\mathrm{ell}} \subset \widehat{A}$ be the open subset parameterizing integral spectral curves (the elliptic locus). Following Ng\^o's method, Yun \cite{Yun2, Yun3} proved a parabolic support theorem over $C \times \widehat{A}^{\mathrm{ell}}$ and determined all the supports of (\ref{parah}) in $C \times \widehat{A}^\mathrm{ell}$. More precisely, by \cite[Section 2]{Yun3} any strict subset of $C \times \widehat{A}^{\mathrm{ell}}$, which is a support of $\widehat{h}^{\mathrm{par}}|_{C \times \widehat{A}^{\mathrm{ell}}}$, is a component of the \emph{endoscopic loci} governed by the endoscopic theory of $G$. As we only consider the special case $G = \mathrm{PGL}_n$, there are no nontrivial endoscopic loci and the restricted Hitchin map $\widehat{h}^{\mathrm{par}}|_{C \times \widehat{A}^{\mathrm{ell}}}$ has full support.

To prove Theorem \ref{thm3.2}, it suffices to show that there is no support of (\ref{parah}) whose generic point lying outside $C \times \widehat{A}^{\mathrm{ell}}$.

Since stability for a $\mathrm{PGL}_n$ Higgs bundle is described by its corresponding vector bundle (see (\ref{233})), it is more convenient to work with the $\mathrm{SL}_n$ moduli spaces. We consider the following Cartesian diagram
\begin{equation}\label{diag321}
\begin{tikzcd}
\widecheck{M}^{\mathrm{par}} \arrow[r, "(-)/\Gamma"] \arrow[d, "\pi'"]
& \widehat{M}^{\mathrm{par}} \arrow[d, "\pi"] \\
C\times \widecheck{M} \arrow[r, "(-)/\Gamma"]
& C \times \widehat{M}.
\end{tikzcd}
\end{equation}
Here the horizontal maps are given by the natural quotient maps by $\Gamma = \mathrm{Pic}^0(C)[n]$. To describe the map $\pi'$ at the left-side of the diagram (see \cite[Example 2.2.5]{Yun1}),  we recall that $\widecheck{M}$ parameterizes traceless Higgs bundles with fixed determinant  
\[
(\CE, \theta) , \quad \theta: \CE \to \CE\otimes \Omega_\CL, \quad \mathrm{rk}(\CE) = n, ~~\mathrm{det}(\CE) \simeq \CN,~~ \mathrm{trace}(\theta) = 0
\]
with respect to slope stability. Similarly $\widecheck{M}^{\mathrm{par}}$ parameterizes 
\[
(x, \CE = \CE_0 \supset \CE_1 \supset \cdots \supset \CE_n =  \CE_0(-x), \theta)
\]
where $x\in C$, $(\CE, \theta) \in \widecheck{M}$, each $\CE_i$ in the flag is of rank $n$ with $\CE_i/\CE_{i+1}$ a length 1 skyscraper sheaf supported at $x$, the Higgs field preserves the flag $\theta(\CE_i) \subset \CE_i\otimes \Omega_\CL$, and the map $\pi'$ in the diagram is the forgetful map. We note that the stability condition for a parabolic Higgs bundle is determined by the stability of the underlying Higgs bundle. 

Following \cite[Example 2.2.5]{Yun1} we may also describe $\widecheck{M}^{\mathrm{par}}$ via spectral curves. If we present a point in $\widecheck{M}$ as a 1-dimensional sheaf $\CF_\alpha$ supported on a spectral curve $C_\alpha \subset \mathrm{Tot}(\Omega_\CL)$ with $p_\alpha: C_\alpha \to C$ the projection, then for any $x\in C$, a closed point in the fiber $\pi'^{-1}(x, \CF_\alpha)$ is represented by
\[
\CF_\alpha = \CF_0 \supset \CF_1 \supset \cdots \supset \CF_n = \CF_0\otimes p_\alpha^* \CO_C(-x), \quad \mathrm{lengh}(\CF_i/\CF_{i+1}) = 1;
\]
see \cite[(2.4)]{Yun1}.

Now we consider the $\mathrm{SL}_n$ parabolic Hitchin system
\begin{equation}\label{SLh}
\widecheck{h}^{\mathrm{par}}: \widecheck{M}^{\mathrm{par}} \to C\times \widecheck{M} \to C\times \widehat{A}.
\end{equation}
From the diagram (\ref{diag321}), it recovers the $\mathrm{PGL}_n$ Hitchin system $\widehat{h}^{\mathrm{par}}: \widehat{M}^{\mathrm{par}} \to C \times \widehat{A}$ by taking the quotient of the source by $\Gamma$. In particular, we have
\[
R{\widehat{h}^{\mathrm{par}}}_* ~\BQ_{\widehat{M}^{\mathrm{par}}} = \left(R{\widecheck{h}^{\mathrm{par}}}_* ~\BQ_{\widecheck{M}^{\mathrm{par}}}\right)^\Gamma \in D_c^b(C\times \widehat{A}).
\]
Hence in order to prove Theorem \ref{thm3.2}, it suffices to show that there is no support of (\ref{SLh}) with generic point lying outside $C \times \widehat{A}^{\mathrm{ell}}$. In the next two sections, we adapt the strategy of Chaudouard--Laumon \cite{CL} and de Cataldo \cite{dC_SL} (for the $\mathrm{SL}_n$-version) to this parabolic setting, and verify the parabolic analogs of (I) and (II) of Section \ref{Sec4.1}.

\subsection{Weak abelian fibrations and $\delta$-regularity}

A general approach for proving support theorems was given in \cite[Theorem 1.8]{MS2}. We apply it here for the parabolic Hitchin system (\ref{SLh}). In this section, we explain how existing results allow us to reduce Theorem \ref{thm3.2} to a relative dimension bound (\ref{relative}). Then in the next section we prove this bound. 

For the reader's convenience, we first recall some of the necessary ingredients. As mentioned in (I) of Section \ref{Sec4.1}, the notion of \emph{weak abelian fibration} introduced by Ng\^o \cite{Ngo} plays a key role. A weak abelian fibration is a triple $(\CP, \CM, \CA)$ with morphisms 
\[
f: \CM\to \CA, \quad g: \CP \to \CA,
\]
such that all $\CP, \CM, \CA$ are nonsingular, $f$ is proper, $g: \CP \to \CA$ is a smooth group scheme, and $\CP$ acts on $\CM$ relatively over $\CA$; they satisfy the following conditions:
\begin{enumerate}
    \item[(i)] every closed fiber of the $\CP \to \CA$ has dimension $= \mathrm{dim}\CM - \mathrm{dim}\CA$,
    \item[(ii)] the action of $\CP$ on $\CM$ has affine stabilizers, and
    \item[(iii)] the Tate module associated with $\CP \to \CA$ is polarizable.
\end{enumerate}
We refer to \cite[Section 1.1]{MS2} for more details.

Now we show that (\ref{SLh}) is naturally enhanced into a weak abelian fibration. We first construct the $(C\times \widehat{A})$-group scheme $P$ which acts on $\widecheck{M}^{\mathrm{par}}$. Recall that in \cite{dC_SL} de Cataldo showed that $\widecheck{M}$ admits a weak abelian fibration structure $(\widecheck{P}, \widecheck{M}, \widehat{A})$. Here the $\widehat{A}$-group scheme $\widecheck{P}$ which acts on $\widecheck{M}$ is given by the identity component of the relative Prym variety associated with the spectral curves \cite[(43)]{dC_SL}. For a spectral curve $p_\alpha: C_\alpha \to C$ with a Higgs bundle given by $\CF_\alpha$ supported on $C_\alpha$, the Prym action is induced by tensor product
\[
\CQ \cdot \CF_{\alpha} = \CQ \otimes \CF_\alpha, \quad \CQ \in \mathrm{Prym}^0(C_\alpha/C) = \widecheck{P}_{[C_\alpha]},~ \quad [C_\alpha] \in \widehat{A}. 
\]
We denote by $P$ the $(C\times \widehat{A})$-group scheme obtained by the pullback of $\widecheck{P}$. The $P$-action on $C\times \widecheck{M}$ can be lifted to $\widecheck{M}^{\mathrm{par}}$:
\[
\CQ \cdot (x, \CF_\alpha = \CF_0 \supset \CF_1 \supset \cdots \supset \CF_n) =  (x, \CQ\otimes\CF_\alpha = \CQ \otimes \CF_0 \supset \CQ\otimes\CF_1 \supset \cdots \supset \CQ \otimes \CF_n),
\] 
since the stability condition does not rely on the flag.

\begin{prop}\label{prop4.1}
The triple $(P, \widecheck{M}^{\mathrm{par}}, C\times \widehat{A})$ forms a weak abelian fibration, \emph{i.e.},  it satisfies (i,ii,iii) above or of \cite[Section 1.1]{MS2}.
\end{prop}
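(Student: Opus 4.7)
My plan is to bootstrap from de Cataldo's weak abelian fibration structure $(\widecheck{P}, \widecheck{M}, \widehat{A})$ established in \cite{dC_SL}. Two operations connect that triple to the present one: base change along $\mathrm{pr}_{\widehat{A}}: C\times \widehat{A} \to \widehat{A}$, which is how $P$ is constructed from $\widecheck{P}$, and the Springer-like enhancement $\pi': \widecheck{M}^{\mathrm{par}} \to C \times \widecheck{M}$ from (\ref{diag321}), which by (\ref{diag0}) is a pullback of the Grothendieck simultaneous resolution $\pi_G: [\mathfrak{b}/B]_\CL \to [\mathfrak{g}/G]_\CL$ and hence is proper and generically finite. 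Each of the three axioms (i)--(iii) of \cite[Section 1.1]{MS2} will then follow by transporting the corresponding property of $(\widecheck{P}, \widecheck{M}, \widehat{A})$ through one or both of these operations.

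Preliminary structural hypotheses are easily dispatched: $P$ is a smooth commutative group scheme over $C\times \widehat{A}$ since $\widecheck{P}$ is such over $\widehat{A}$; the lifted $P$-action on $\widecheck{M}^{\mathrm{par}}$ is already written down above the proposition and uses that parabolic stability depends only on the underlying Higgs bundle; and $\widecheck{h}^p$ is proper by Proposition \ref{prop3.1} combined with descent along the finite $\Gamma$-cover (\ref{diag321}).

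For (i) equi-dimensionality, the pullback $P \to C\times \widehat{A}$ has fiber-wise dimension $d := \dim\widecheck{M} - \dim\widehat{A}$ matching $\widecheck{P} \to \widehat{A}$; since $\pi'$ is proper and generically finite, $\dim\widecheck{M}^{\mathrm{par}} = \dim(C\times \widecheck{M})$, so $\widecheck{h}^p$ also has relative dimension $d$. For (ii) affine stabilizers, the $P_s$-stabilizer of a parabolic point $(x, \CF_0\supset\cdots\supset \CF_n)$ is a closed subgroup scheme of the $\widecheck{P}_{[C_\alpha]}$-stabilizer of $\CF_\alpha=\CF_0$, because any $\CQ \in \mathrm{Prym}^0(C_\alpha/C)$ preserving the entire flag in particular preserves $\CF_0$; closed subgroup schemes of affine group schemes are affine, so (ii) is inherited. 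For (iii) polarizability of the Tate module, the property is preserved under base change, and $P$ is pulled back from the polarizable $\widecheck{P}$ of \cite{dC_SL}.

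The only delicate point I expect is (i), namely that enriching by parabolic data at a moving point $x \in C$ does not alter the relative fiber dimension. This is exactly what the Grothendieck--Springer presentation in (\ref{diag0}) provides, and once that is in hand the rest of the argument is formal.
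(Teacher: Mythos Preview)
Your proposal is correct and follows essentially the same route as the paper's proof: condition (i) is read off from the Cartesian diagram (\ref{diag0}) and the fact that $\pi'$ is a base change of $\pi_G$; condition (ii) is deduced because the $P$-stabilizer of a parabolic point sits inside the (affine) stabilizer of its image under $\pi'$; and condition (iii) is inherited from $\widecheck{P}$ since it concerns only the group scheme and is preserved under base change. The paper's own argument is terser but identical in substance.
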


\begin{proof}
    (i) only concerns the dimension of $\widecheck{M}$ which is clear from (\ref{diag0}), and (iii) only depends on the group scheme $P$ which follows from the corresponding property for $\widecheck{P}$ proved in \cite[Theorem 4.7.2]{dC_SL}. To prove (ii): since the $P$-action on $\widecheck{M}^{\mathrm{par}}$ is a lifting of the $P$-action on $C\times \widecheck{M}$, and the latter has affine stabilizers by \cite{dC_SL}, we obtain that the $P$-stabilizer for any point $z \in \widecheck{M}^{\mathrm{par}}$ is contained in the (affine) $P$-stabilizer for $\pi(z) \in C \times \widecheck{M}$. This proves (ii).
\end{proof}

%We obtain immediately the $\delta$-regularity ((II) of Section \ref{Sec4.1}) of the weak abelian fibration $(P, \widecheck{M}, C\times \widehat{A})$. Indeed, this is a property only dependent on $P$, which follows from the corresponding property established for $\widecheck{P}$ \cite[corollary 5.4.4]{dC_SL}. 

Next, we consider the $\delta$-function on the base $C\times \widehat{A}$. The group scheme $P$ endows $C\times \widehat{A}$ with an upper semi-continuous function
\[
\delta: C\times \widehat{A} \to \BN
\]
calculating the dimension of the affine part of the commutative group scheme given by each closed fiber. For a closed subset $Z\subset C\times \widehat{A}$, we define $\delta_Z$ to be the minimal value of the function $\delta$ on $Z$. 

We say that the weak abelian fibration  $(P, \widecheck{M}^{\mathrm{par}}, C\times \widehat{A})$ given by Proposition \ref{prop4.1} satisfies the \emph{support inequality} ((I) of Section \ref{Sec4.1}), if the inequality
\begin{equation}\label{delta_ineq}
\mathrm{codim}_{C\times \widehat{A}}Z \leq \delta_Z
\end{equation}
holds for any irreducible support $Z \subset C\times \widehat{A}$ associated with (\ref{SLh}). Furthermore, by \cite[Theorem 1.8]{MS2}, the support inequality (\ref{delta_ineq}) follows from the relative dimension bound
\begin{equation}\label{relative}
   \tau_{> 2\mathbf{d}}\left( R\widecheck{h}^{\mathrm{par}}_* \BQ_{\widecheck{M}^{\mathrm{par}}}\right) = 0, \quad \mathbf{d}: = \mathrm{dim} \widecheck{M}^{\mathrm{par}} - \mathrm{dim} (C\times \widehat{A})
\end{equation}
with $\tau_{>*}$ the standard truncation functor. 

Once we have (\ref{relative}),
we can combine the support inequality (\ref{delta_ineq}) with $\delta$-regularity of 
$\widecheck{P}$ \cite[corollary 5.4.4]{dC_SL} to prove 
Theorem \ref{thm3.2} as follows.  If $Z \subset C \times \widehat{A}$ is an irreducible support of $\widecheck{h}^{\mathrm{par}}$, the projection $W = \mathrm{pr}_{\widehat{A}}(Z) \subset \hat{A}$ also satisfies the support inequality
\[
\mathrm{codim}_{\widehat{A}} W \leq \mathrm{codim}_{C\times \widehat{A}}Z \leq \delta_Z = \delta_W.
\]
Here we used in the second inequality the support inequality (\ref{delta_ineq}) for $\widecheck{h}^{\mathrm{par}}$, and used in the last equality that the $\delta$-function on $C\times \widehat{A}$ is pulled back from $\widehat{A}$. The argument of \cite[Section 6.2]{dC_SL} then shows that the generic point of $W$ lies in $\widehat{A}^{\mathrm{ell}}$. Hence the generic point of $Z$ lies in $C\times \widehat{A}^{\mathrm{ell}}$ as we desired.

%This is because the argument there only depends on $(\widecheck{P}, \widehat{A})$, which automatically works for $(P, C\times \widehat{A})$.

In conclusion, this reduces Theorem \ref{thm3.2} to (\ref{relative}).

\subsection{Proof of the relative dimension bound (\ref{relative})}\label{sec4.4}

We need to show that each closed fiber of the morphism
$$\widecheck{h}^{\mathrm{par}}: \widecheck{M}^{\mathrm{par}} \rightarrow C\times \widehat{A}$$
has dimension
\[
\mathbf{d} = \mathrm{dim}\widecheck{M}^{\mathrm{par}} - \mathrm{dim} (C\times \widehat{A}) =    \frac{n(n-1)}{2}\deg(\CL) + (n^2-1)(g-1).\]
Here the last equation is obtained by the dimension formula of \cite[Proposition 2.4.6]{dC_SL} directly:
\[
\mathbf{d} =  \mathrm{dim}\widecheck{M} - \mathrm{dim}\widehat{A} = \frac{n(n-1)}{2} \deg(\CL) + (n^2-1)(g-1).
\]

%$$\dim \widetilde{\CM_{L}} - \dim \CA_{L}- 1 = \frac{1}{2}(n-1)(n)\deg L + (n-1)(g-1).$$

First,  we note that the morphism $\widecheck{h}^{\mathrm{par}}$ is surjective since the usual Hitchin map $\widecheck{h}: \widecheck{M} \to \widehat{A}$ is surjective. Furthermore, as $\widecheck{h}^{\mathrm{par}}$ is equivariant with respect to the scaling $\BG_m$-action on the Higgs fields, it suffices to bound from above the dimension of the fiber over $(x, 0) \in C\times \widehat{A}$ for each point $x \in C$ by $\mathbf{d}$, since upper semicontinuity will force all other fibers to have the same dimension upper-bound.\footnote{Furthermore, we actually show that each fiber has the same dimension. But the dimension bound is enough for our purpose.}  We fix the point $p$ from now on.

Using the assumption $\deg(\CL)>2g$, we may express $\CL$ as $\CO_C(D)$ with $D = x_0 + x_1 + \dots + x_t$ an effective reduced divisor containing $x = x_0$. In particular, a Higgs bundle $(\CE, \theta: \CE \to \CE\otimes \Omega_\CL)$ can be viewed as a meromorphic (un-twisted) Higgs bundles with at most simple poles along $D$:
\[
(\CE, \theta), \quad \theta: \CE \to \CE \otimes \Omega_C^1(D).
\]
We will control the fiber dimension using an analogous bound for the nilpotent cone for \emph{strongly parabolic} Higgs bundles for the pair $(C,D)$.

Recall that, given $D$ as above, a strongly parabolic Higgs bundle consists of an $\mathrm{SL}_n$ Higgs bundle of degree $d$:
\[
(\CE, \theta), \quad \theta: \CE\to \CE \otimes \Omega^1_C(D),~~~\mathrm{rk}(\CE) =n,~~\mathrm{det}(\CE)\simeq \CN, ~~\mathrm{trace}(\theta) = 0,
\]
along with a flag at each point $x_i \in D$:
\[
\CE|_{x_i}=F_{x_i,0}  \supset F_{x_i,1} \supset F_{x_i,2}, \supset \cdots \supset F_{x_i,n} = 0, \quad \mathrm{dim} F_{x_i,j}/F_{x-i, j+1} = 1,
\]
such that the Higgs field satisfies $\theta(F_{x_i, j}) \subset F_{x_i, j+1}$.\footnote{One may compare this with the (non-strong) parabolic condition $\theta(F_{x,j}) \subset F_{x,j}$ we used earlier in this paper for the global Springer theory.}

%a vector bundle $\CE$ on $C$ with fixed determinant $\mathrm{det}(\CE) \simeq \CN$, along with reduction of structure group to the Borel subgroup $B$ at each point of $D$, equipped with a meromorphic Higgs field $\phi \in H^0(C, \mathfrak{g}\otimes K_C(D))$ such that the residue of $\phi$ at each point of $D$ lies in the $\mathfrak{n}$.  In terms of a flag of subspaces $F_n \subset F_{n-1} \subset \dots$, this last condition translates to requiring $\phi(F_j) \subset F_{j+1}$ for each $j$.

Let $\widecheck{M}^{\mathrm{spar}}(D)$ denote the moduli space of strongly parabolic Higgs bundles associated with the pair $(C,D)$, such that the underlying twisted Higgs bundle is stable. Let
$$A(D) = \bigoplus_{i=2}^{n} H^0(C, \Omega_\CL^{\otimes i}(-D)) \subset \widehat{A}$$
denote the linear subspace consisting of sections which vanish along the points of $D$. There is a Hitchin map for the strongly parabolic Higgs moduli space
$$\widecheck{h}^{\mathrm{spar}}:\widecheck{M}^{\mathrm{spar}}(D) \rightarrow A(D)$$
which is proper, surjective, and Lagrangian with respect to a natural holomorphic symplectic form \cite{Faltings}. In particular, the dimension of the zero fiber is given by
\begin{align*}
\dim \widecheck{M}^{\mathrm{spar}}(D)_0 = \dim A(D) &= \sum_{j=2}^{n} \left((j-1)(\deg(\Omega_L)) +g-1\right)\\
&= \frac{n(n-1)}{2}\deg (\CL) + (n^2-1)(g-1).
\end{align*}
See also \cite[Theorem 6.9]{SWW} for a direct proof of the above dimension formula.\footnote{The formula above has $g$ less than the dimension formula obtained in \cite[Theorem 6.9]{SWW}, since we consider the $\mathrm{SL}_n$ case where we fixed the determinant on $C$.}

We now consider the following diagram relating the two types of parabolic Higgs moduli spaces $\widecheck{M}^{\mathrm{spar}}(D)$ and 
$\widecheck{M}^{\mathrm{par}}$:

\begin{equation}\label{diagram}
\begin{tikzcd}
\widecheck{M}^{\mathrm{spar}}(D)_0 \arrow{d}{q_0}\arrow[r,hook]
&\widecheck{M}^{\mathrm{spar}}(D)\arrow{d}{q}
& 
\\
\widecheck{M}^{\mathrm{par}}_{(x,0)}\arrow{d}\arrow[r, hook]&
{\widecheck{M}^{\mathrm{par}}}|_{x \times A(D)} \arrow{d}\arrow[r,hook]&
\widecheck{M}^{\mathrm{par}}\arrow[d,"\widecheck{h}^{\mathrm{par}}"]\\
\{(x,0)\} \arrow[r, hook] & x \times A(D)  \arrow[r, hook] & C\times \widehat{A}
\end{tikzcd}
\end{equation}
where all squares are Cartesian.  

We first claim that the natural map $q$ (sending a strong parabolic Higgs bundle to a parabolic Higgs bundle) is surjective. Indeed, given a point $z \in \widecheck{M}^{\mathrm{par}}$ lying over $C\times {A}(D)$,
choosing a point in its preimage $q^{-1}(z)$ only consists of fixing a flag at each point of $x_i$ with $i>0$, preserved by the Higgs field --- since the characteristic polynomial already has zero roots at all points of $D$, the Higgs field is automatically strongly parabolic.

By base change, this implies that $q_0$ is surjective as well, so we get the desired dimension upper bound
\[
\dim {\widecheck{M}^{\mathrm{par}}}_{(x,0)} \leq \dim \widecheck{M}^{\mathrm{spar}}(D)_0 = \frac{n(n-1)}{2}\deg (\CL) + (n^2-1)(g-1),
\]
which completes the proof. \qed

\begin{rmk}
Combining Sections 3 and 4 proves that Theorem \ref{conj2.7} holds for a line bundle $\CL$ of sufficiently large degree. Then the argument of Section \ref{Sec2.3} implies that it actually holds for any effective $\CL$.
\end{rmk}

\end{document}